\newtheorem{theorem}{Theorem}[section]
\newtheorem{lemma}[theorem]{Lemma}
\newtheorem{proposition}[theorem]{Proposition}
\theoremstyle{definition}
\newtheorem{definition}[theorem]{Definition}
\newtheorem{remark}[theorem]{Remark}
\newtheorem{corollary}[theorem]{Corollary}
\numberwithin{equation}{section}
\title{$C^*$-algebra of the $\mathds{Z}^n$-tree}
\author{Menassie Ephrem}  
\address{Department of Mathematics and Statistics, Coastal Carolina
University, Conway, SC 29528-6054}
\email{menassie@coastal.edu}   
\keywords{Directed graph, Cuntz-Krieger algebra, Graph C$^*$-algebra}
\subjclass{46L05, 46L35, 46L55}
\begin{document}


\date{January, 2008.}


\begin{abstract}

Let $\Lambda = \mathbb{Z}^n$ with lexicographic ordering.  $\Lambda$ is
a totally ordered group.  Let $X = \Lambda^+
* \Lambda^+$. Then $X$ is a $\Lambda$-tree. Analogous to the
construction of graph $C^*$-algebras, we form a groupoid whose unit
space is the space of ends of the tree. The $C^*$-algebra of the
$\Lambda$-tree is defined as the $C^*$-algebra of this groupoid. We prove some properties of this $C^*$-algebra.

\end{abstract}

\maketitle

\tableofcontents

\section{Introduction} \label{sect.z21}
Since the introduction of $C^*$-algebras of groupoids, in the late
1970's, several classes of $C^*$-algebras have been given groupoid
models.  One such class is the class of graph $C^*$-algebras.

In their paper \cite{KPRR}, Kumjian, Pask, Raeburn and Renault
associated to each locally finite directed graph $E$ a locally
compact groupoid $\mathcal{G}$, and showed that its groupoid
$C^*$-algebra $C^*(\mathcal{G})$ is the universal $C^*$-algebra
generated by families of partial isometries satisfying the
Cuntz-Krieger relations determined by $E$.  In \cite{Sp1}, Spielberg constructed a locally compact groupoid $\mathcal{G}$ associated to
a general graph $E$ and generalized the result to a general
directed graph.

We refer to \cite{R} for the detail theory of topological
groupoids and their $C^*$-algebras.

A directed graph $E = (E^0, E^1,~~ o,~~ t)$ consists of a
countable set $E^0$ of vertices and $E^1$ of edges, and maps $o,t:
E^1 \rightarrow E^0$ identifying the origin (source) and the
terminus (range) of each edge.
For the purposes of this discussion it is sufficient to consider
row--finite graphs with no sinks.

For the moment, let $T$ be a bundle of of row--finite directed trees with
no sinks, that is a disjoint union of trees that have no sinks or infinite emitters, i.e., no singular vertices.  We denote the set of finite paths of $T$ by
$T^*$ and the set of infinite paths by $\partial T$.

For each $p \in T^*$, define $$V(p):=\{px:x \in \partial T,~~ t(p) =
o(x)\}.$$
For $p, q \in T^*$, we see that:
\begin{displaymath}
V(p) \cap V(q) = \left\{ \begin{array}{ll}
        V(p) & \textrm{if $p = qr$ for some $r \in T^*$}\\
        V(q) & \textrm{if $q = pr$ for some $r \in T^*$}\\
        \emptyset & \textrm{otherwise}.
        \end{array} \right.
\end{displaymath}
It is fairly easy to see that:
\begin{lemma}
The cylinder sets $\{V(p):p \in T^*\}$ form a base of compact open
sets for a locally compact, totally disconnected, Hausdorff
topology of $\partial T$.
\end{lemma}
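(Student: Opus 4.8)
The plan is to check the base axioms first, then to read off Hausdorffness and total disconnectedness from the intersection formula, and finally to prove that each cylinder $V(p)$ is compact — the last being the only place where row--finiteness and the absence of sinks are genuinely used.

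To see that $\{V(p):p\in T^*\}$ is a base, note every $x=e_1e_2\cdots\in\partial T$ lies in the cylinder $V(e_1)$, so the cylinder sets cover $\partial T$; and if $x\in V(p)\cap V(q)$ then $V(p)\cap V(q)\neq\emptyset$, so by the displayed formula this intersection equals one of $V(p)$, $V(q)$ — in particular it is itself a cylinder set containing $x$. Hence the $V(p)$ generate a topology in which they are by construction open. For Hausdorffness, take $x\neq y$ in $\partial T$ and let $q$ be their longest common initial segment; since $x,y$ are infinite and distinct we may write $x=qex'$, $y=qfy'$ with $e\neq f$ edges, so $x\in V(qe)$, $y\in V(qf)$, and as neither of $qe$, $qf$ extends the other the formula gives $V(qe)\cap V(qf)=\emptyset$. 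The same idea shows each $V(p)$ is closed: if $x\notin V(p)$ then $p$ is not an initial segment of $x$, so the longest common initial segment $q$ of $p$ and $x$ is a proper initial segment of $p$; writing $p=qep'$ and $x=qfx'$ with $e\neq f$, the cylinder $V(qf)$ contains $x$ and, again by the formula, is disjoint from $V(p)$. Thus the $V(p)$ are clopen, the topology has a base of clopen sets, and being Hausdorff it is totally disconnected; local compactness will be immediate once we know the $V(p)$ are compact, since every $x=e_1e_2\cdots$ has the neighbourhood $V(e_1)$.

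It remains to prove each $V(p)$ compact, which is the main obstacle. Given an open cover of $V(p)$, refine it: for each $x\in V(p)$ choose a basic set $V(q)\subseteq U_\alpha$ with $x\in V(q)$; the intersection formula forces either $V(p)\subseteq V(q)$ — in which case a single $U_\alpha$ already covers $V(p)$ and we are done — or $q=pr$ extends $p$. So we may assume a cover $\{V(pr_i)\}_i$ by cylinders extending $p$, and suppose toward a contradiction that it has no finite subcover. Call a finite extension $pr$ of $p$ \emph{bad} if $V(pr)$ has no finite subcover from the $V(pr_i)$; then $p$ is bad. If $pr$ is bad, then $t(pr)$ emits finitely many edges $e_1,\dots,e_k$ and at least one (row--finiteness and no sinks), whence $V(pr)=\bigcup_{j=1}^k V(pre_j)$, so some $pre_j$ must again be bad. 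Iterating yields an infinite path $x=e_1e_2\cdots$ with $o(x)=t(p)$, every initial extension of $p$ along $x$ being bad. But $px\in V(p)$, so $px\in V(pr_i)$ for some $i$, which means $pr_i$ is an initial segment of $px$, i.e.\ $pr_i=pe_1\cdots e_m$ for some $m$; that $pr_i$ is then bad, yet $V(pr_i)$ is covered by the single set $V(pr_i)$ — a contradiction. Hence $V(p)$ is compact, and together with the previous paragraph this establishes all the claimed properties. The delicate point throughout is precisely this König's lemma style construction, where the finite--branching and no--sinks hypotheses are indispensable.
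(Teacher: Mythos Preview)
Your argument is correct. The base axioms, Hausdorffness, and clopenness follow cleanly from the intersection formula, and your K\"onig's-lemma construction for compactness is sound: the chain of ``bad'' extensions uses row--finiteness (finite branching) and the no--sinks hypothesis (so that an edge can always be appended), and the eventual contradiction with some $V(pr_i)$ covering itself is exactly right. One tiny cosmetic point: in the Hausdorff and closedness arguments you silently assume $x$ and $y$ (resp.\ $x$ and $p$) begin at the same vertex of the bundle; when they do not, the separating cylinders are simply $V(o(x))$ and $V(o(y))$, so nothing is lost.

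As for comparison with the paper: there is nothing to compare. The paper states this lemma as background, prefaced by ``It is fairly easy to see that,'' and gives no proof whatsoever --- it is treated as a standard fact about path spaces of row--finite sink--free forests (essentially the well-known identification of $\partial T$ with a closed subspace of a product of finite discrete spaces). Your write-up supplies precisely the details the paper omits, and does so along the lines one would expect; the later Lemma in Section~\ref{sect.z22} of the paper, proving compactness of cylinders in the genuinely new $\mathds{Z}^n$-tree setting, is a more elaborate descendant of the same K\"onig-type idea you use here.
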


We want to define a groupoid that has $\partial T$ as a unit space. For $x =
x_1x_2\ldots$, and $y = y_1y_2 \ldots \in \partial T$, we say
\textit{$x$ is shift equivalent to $y$ with lag $k \in
\mathds{Z}$} and write $x \sim_k y$, if there exists $n \in
\mathds{N}$ such that $x_i = y_{k+i} ~~ \textrm{for each } i \geq
n$.
It is not difficult to see that shift equivalence is an
equivalence relation.

\begin{definition} \label{def.501}
Let $\mathcal{G} := \{(x,k,y) \in  \partial T \times \mathds{Z}
\times \partial T : x \thicksim_k y \}$.
For pairs in $\mathcal{G}^2 := \{((x,k,y),(y,m,z)) :
(x,k,y),(y,m,z) \in \mathcal{G}\}$, we define
\begin{equation}
\label{eq01:eps} (x,k,y) \cdot (y,m,z) = (x, k+m, z).
\end{equation}
For arbitrary $(x,k,y) \in \mathcal{G}$, we define
\begin{equation} \label{eq02:eps}(x,k,y)^{-1} = (y, -k, x). \end{equation}
\end{definition}
With the operations \eqref{eq01:eps} and \eqref{eq02:eps}, and
source and range maps $s,r:\mathcal{G} \longrightarrow \partial T$
given by $s(x,k,y) = y,~~ r(x,k,y) = x,~~ \mathcal{\mathcal{G}}$
is a groupoid with unit space $\partial T$.

For $p,q \in T^*$, with $t(p) = t(q)$, define $U(p,q) := \{px,
l(p)-l(q),qx):x \in \partial T,~~ t(p) = o(x)\}$, where $l(p)$
denotes the length of the path $p$.
The sets $\{U(p,q):p,q\in T^*, t(p) = t(q)\}$ make $\mathcal{G}$ a
locally compact $r$-discrete groupoid with (topological) unit
space equal to $\partial T$.

Now let $E$ be a directed graph.  We form a graph whose
vertices are the paths of $E$ and edges are (ordered) pairs of
paths as follows:

\begin{definition}
Let $\widetilde{E}$ denote the following graph:

$\widetilde{E}^0 = E^*$

$\widetilde{E}^1 = \{(p,q) \in E^* \times E^* : q = pe~~ for~~
some~~ e \in E^1\}$

$o(p,q) = p,~~ t(p,q) = q$.
\end{definition}

The following lemma, due to Spielberg \cite{Sp1}, is straightforward.

\begin{lemma} \label{lemm.sp24} \cite[Lemma 2.4]{Sp1}
$\widetilde{E}$ is a bundle of trees.
\end{lemma}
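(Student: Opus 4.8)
The plan is to realize $\widetilde{E}$ as a disjoint union of rooted trees indexed by $E^0$. For each $v \in E^0$, let $\widetilde{E}_v$ be the full subgraph of $\widetilde{E}$ on the vertex set $\{p \in E^* : o(p) = v\}$. Since every finite path has a well-defined origin, $\widetilde{E}^0 = \bigsqcup_{v \in E^0} \widetilde{E}_v^0$; and if $(p,q) \in \widetilde{E}^1$ with $q = pe$, then $o(q) = o(p)$, so both endpoints of every edge lie in a single $\widetilde{E}_v$. Hence $\widetilde{E} = \bigsqcup_{v \in E^0} \widetilde{E}_v$ as directed graphs, and it suffices to show that each $\widetilde{E}_v$ is a tree with no sinks and no infinite emitters.

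To that end I would first record two elementary facts about edges in $\widetilde{E}_v$. Writing a path of positive length as its string of edges, $p = e_1 e_2 \cdots e_n$, the unique edge of $\widetilde{E}$ into $p$ is $(e_1 \cdots e_{n-1},\, p)$ --- so $p$ has a unique ``parent'' --- whereas the length-zero vertex $v$ receives no edge at all; and every edge of $\widetilde{E}$ increases length by exactly one. Connectedness of $\widetilde{E}_v$ follows at once: the chain of truncations $v,\, (e_1),\, (e_1 e_2),\, \dots,\, (e_1 \cdots e_n)$ is a path from $v$ to $p$. For acyclicity of the underlying undirected graph, note that because length changes by $\pm 1$ along an edge there is at most one edge between two given vertices and no anti-parallel pair, so an undirected cycle would meet at least three distinct vertices; choosing a vertex $p$ of maximal length on such a cycle, its two distinct cycle-neighbours would each have length $l(p) - 1$, forcing two distinct edges into $p$ and contradicting uniqueness of the parent. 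Thus each $\widetilde{E}_v$ is connected and acyclic, i.e.\ a tree.

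It remains to transfer the ``row-finite, no sinks'' hypothesis: for $p \in \widetilde{E}_v^0$, the edges of $\widetilde{E}_v$ emitted by $p$ correspond bijectively to the edges $e \in E^1$ with $o(e) = t(p)$, and since $E$ is row-finite with no sinks there is at least one and only finitely many such $e$. Hence $\widetilde{E}_v$ has no sinks and no infinite emitters, and $\widetilde{E}$ is a bundle of row-finite directed trees with no sinks. The only step that will require a moment's thought is the acyclicity argument; everything else is bookkeeping with the definition of $\widetilde{E}$, which is why the lemma is stated as straightforward.
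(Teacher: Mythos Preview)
Your proof is correct. The paper does not actually supply a proof of this lemma; it merely cites \cite[Lemma~2.4]{Sp1} and calls the result ``straightforward,'' with the subsequent remark that row-finiteness and absence of sinks transfer from $E$ to $\widetilde{E}$. Your argument---decomposing $\widetilde{E}$ into the components $\widetilde{E}_v$ indexed by origins, then using unique-parent and the length function for connectedness and acyclicity---is exactly the natural way to fill in the omitted details, and it matches what one would expect from Spielberg's original.
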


Notice that if $E$ is a row--finite graph with no sinks, then
$\widetilde{E}$ is a bundle of row--finite trees with no sinks.

If $\mathcal{G}(E)$ is the groupoid obtained as in Definition
\ref{def.501}, where $\widetilde{E}$ plays the role of $T$ then, in \cite{Sp1} Spielberg showed, in its full generality, that
the graph $C^*$-algebra of $E$ is equal to the $C^*$-algebra
of the groupoid $\mathcal{G}(E)$.  We refer to \cite{Sp1}, for
readers interested in the general construction and the proof.

We now examine the $C^*$-algebra $\mathcal{O}_2$, which is
the $C^*$-algebra of the graph $$E\hspace{.3in} =\hspace{.3in}
\xymatrix{0 \ar@(dl,dr)[]_a \ar@(ru,ul)[]_b}$$
Denoting the vertex of $E$ by 0 and the edges of $E$ by $a$ and
$b$, as shown in the graph, the vertices of $\widetilde{E}$ are
$0, a, b, aa, ab, ba, bb,$ etc.  And the graph $\widetilde{E}$ is
the binary tree.

Take a typical path $p$ of $E$, say $p = aaabbbbbabbaaaa$.  Writing $aaa$
as 3 and $bbbbb$ as $5'$, etc. we can write $p$ as $35'12'4$ which is an
element of $\mathds{Z}^+ * \mathds{Z}^+$ (the free product of two
copies of $\mathds{Z}^+$). In other words, the set of vertices of
$\widetilde{E}$ is $G_1^+ * G_2^+$, where $G_1^+ = \mathds{Z}^+ =
G_2^+$, and the vertex 0 is the empty word.  The elements of $\partial \widetilde{E}$ are the infinite sequence
of $n$'s and $m$'s, where $n \in G_1^+$ and $m \in G_2^+$.

Motivated by this construction, we wish to explore the
$C^*$-algebra of the case when $\Lambda$ is an ordered abelian
group, and $X$ is the free product of two copies of $\Lambda^+$.  In this paper we study the special case when $\Lambda =
\mathds{Z}^n$ endowed with the lexicographic ordering, where $n \in \{2,~~ 3,~~ \ldots\}$.

The paper is organized as follows.  In section 2 we develop the topology of the $\mathds{Z}^n$-tree.  In section 3 we build the $C^*$-algebra of the $\mathds{Z}^n$-tree by first building the groupoid $\mathcal{G}$ in a fashion similar to that of the graph groupoid.  In section 4, by explicitly exploring the partial isometries generating the $C^*$-algebra, we give a detailed description of the $C^*$-algebra.  In section 5, we look at the crossed product of the $C^*$-algebra by the gauge action and study the fixed-point algebra.  Finally in section 6 we provide classification of the $C^*$-algebra. We prove that the $C^*$-algebra is simple, purely infinite, nuclear and
classifiable.

I am deeply indebted to Jack Spielberg without whom none of this would have been possible.  I also wish to thank Mark Tomforde for many helpful discussions and for providing material when I could not find them otherwise.

\section{The $\mathds{Z}^n$-tree and its boundary} \label{sect.z22}

Let $n \in \{2,~~ 3,~~, \ldots\}$ and let $\Lambda = \mathds{Z}^n$ together with lexicographic ordering,
that is, $(k_1, k_2, \ldots, k_n) < (m_1, m_2, \ldots, m_n)$ if either $k_1 < m_1$, or
$k_1 = m_1$, $\ldots$, $k_{d-1} = m_{d-1}$, and $k_d < m_d$.  We denote by $\partial\Lambda^+$
the set $\{(k_1, k_2, \ldots, k_{n-1}, \infty):k_i \in \mathds{N} \cup \{\infty\}, ~ k_i = \infty \Rightarrow k_{i+1} = \infty\}$.

Let $G_i = \Lambda^+ = \{a \in \Lambda : a > 0\}$ for $i =
1,~~2$, and let $\partial G_i = \partial \Lambda^+$ for $i = 1,~~2$.
That is, we take two copies of $\Lambda^+$ and label them as $G_1$
and $G_2$, and two copies of $\partial\Lambda^+$ and label them as
$\partial G_1$ and $\partial G_2$.
Now consider the set $X = G_1 * G_2$.  We denote the empty word by
0.
Thus, $\displaystyle X = \bigcup_{d=1}^\infty \{a_1a_2 \ldots a_d : a_i \in G_k \Rightarrow a_{i+1}
\in G_{k\pm 1}\text{ for }  1
\leq i < d\}~~ \bigcup ~~ \{0\}$.  We note that $X$ is a $\Lambda$-tree, as studied in \cite{Ch}.

Let $\partial X = \{a_1a_2 \ldots a_d : a_i \in G_k
\Rightarrow a_{i+1} \in G_{k\pm 1},\text{ for } 1\leq i < d-1\text{ and }  a_{d-1} \in
G_k \Rightarrow a_d \in \partial G_{k\pm 1} \}~~ \bigcup ~~ \{a_1a_2 \ldots : a_i \in G_k \Rightarrow a_{i+1}
\in G_{k\pm 1}\text{ for each } i\}$.  In words, $\partial X$ contains either a finite sequence of
elements of $\Lambda$ from sets with alternating indices, where the last
element is from $\partial \Lambda^+$, or an infinite sequence of
elements of $\Lambda$ from sets with alternating indices.\\
For $a \in \Lambda^+$ and $b \in \partial
\Lambda^+$, define $a+b \in \partial \Lambda^+$ by componentwise addition.

For $p = a_1a_2 \ldots a_k \in X$ and $q = b_1b_2\ldots b_m \in X
\cup \partial X$, i.e., $m \in \mathds{N} \cup \{\infty\}$, define $pq$ as
follows:

\noindent i) If $a_k, b_1 \in G_i \cup \partial G_i$ (i.e., they
belong to sets with the same index), then $pq := a_1a_2 \ldots
a_{k-1} (a_k+b_1)b_2 \ldots b_m$. Observe that since $a_k \in
\Lambda$, the sum $a_k+b_1$ is defined and is in the same set as
$b_1$.

\noindent ii) If $a_k$ and  $b_1$ belong to sets with different
indices, then $pq := a_1a_2 \ldots a_kb_1b_2 \ldots b_m$.

\noindent In other words, we concatenate $p$ and $q$ in the most
natural way (using the group law in $\Lambda * \Lambda$).

For $p \in X$ and $q \in X \cup \partial X$, we write $p \preceq
q$ to mean $q$ extends $p$, i.e., there exists $r \in X \cup
\partial X$ such that $q = pr$.

For $p \in \partial X$ and $q \in X \cup \partial X$, we write $p
\preceq q$ to mean $q$ extends $p$, i.e., $\textrm{for each } r
\in X$, $r \preceq p$ implies that $r \preceq q$.

We now define two length functions. Define $l:X \cup \partial X \longrightarrow (\mathds{N} \cup
\{\infty\})^n$ by $l(a_1a_2\ldots
a_k) := \sum_{i=1}^ka_i$.

And define $l_i:X \cup \partial X \longrightarrow \mathds{N} \cup
\{\infty\}$ to be the $i^{th}$ component of $l$, i.e., $l_i(p)$ is
the $i^{th}$ component of $l(p)$.  It is easy to see that both $l$ and $l_i$ are additive.

Next, we define basic open sets of $\partial X$. For $p, q \in X$, we define $V(p) := \{px:x \in \partial X\}$ and
$V(p;q) := V(p) \setminus V(q)$.

Notice that

\begin{equation}\label{eq11:eps}
V(p) \cap V(q) = \left\{ \begin{array}{ll}
        \emptyset & \textrm{if $p \npreceq q$ and $q \npreceq p$}\\
        V(p) & \textrm{if $q \preceq p$}\\
        V(q) & \textrm{if $p \preceq q$}.
        \end{array} \right.
\end{equation}

Hence

\begin{displaymath}
V(p) \setminus V(q) = \left\{ \begin{array}{ll}
        V(p) & \textrm{if $p \npreceq q$ and $q \npreceq p$}\\
        \emptyset & \textrm{if $q \preceq p$}.
        \end{array} \right.
\end{displaymath}

Therefore, we will assume that $p \preceq q$ whenever we write
$V(p;q)$.\\

Let $\mathcal{E} := \{V(p):p\in X\}~~ \bigcup~~ \{V(p;q):p,q \in
X\}$.

\begin{lemma}\label{Eseparats.points}
$\mathcal{E}$ separates points of $\partial X$, that is, if $x,y \in \partial X$ and $x\neq y$ then there exist two sets $A,B \in \mathcal{E}$ such that $x \in A$, $y\in B$, and $A\cap B = \emptyset$.
\end{lemma}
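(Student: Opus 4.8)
The plan is to take $x, y \in \partial X$ with $x \neq y$ and locate their first point of disagreement, then separate them by cylinder sets built from a common prefix. Write $x = a_1 a_2 \ldots$ and $y = b_1 b_2 \ldots$ (each possibly finite, ending in a $\partial G_i$ term). Let $k \geq 1$ be the least index at which the two words differ; so $p := a_1 \cdots a_{k-1} = b_1 \cdots b_{k-1}$ is a common element of $X$ (take $p = 0$ if $k = 1$), and $a_k \neq b_k$ in the appropriate sense. There are a few cases depending on the nature of this first disagreement, and the main work is a careful case analysis; I expect that to be the only real obstacle, since everything else is bookkeeping with the concatenation law and \eqref{eq11:eps}.

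First I would handle the case where $a_k$ and $b_k$ lie in sets with \emph{different} indices, say $a_k \in G_i \cup \partial G_i$ and $b_k \in G_{i\pm 1} \cup \partial G_{i\pm 1}$. If both $a_k, b_k \in \Lambda^+$ (the generic case), choose any $c \in \Lambda^+$ with $c$ in the same index-set as $a_k$; then $A := V(p\,a_k)$ and $B := V(p\,b_k)$ (or, to be safe about the genuine-extension issue, shrink to $V(p\,a_k\,d)$ for suitable $d$) contain $x$ and $y$ respectively, and $A \cap B = \emptyset$ by \eqref{eq11:eps} since neither $p\,a_k \preceq p\,b_k$ nor $p\,b_k \preceq p\,a_k$ (they diverge already at the $k$-th letter). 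If one of $a_k, b_k$ lies in $\partial G_i$ — meaning that word terminates at position $k$ — essentially the same choice works, with the boundary term handled by noting $V(p')$ for $p' \in X$ still makes sense once we truncate appropriately; alternatively use a $V(\cdot\,;\cdot)$ set. Here is where the two flavors of sets in $\mathcal{E}$ earn their keep.

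Next I would handle the case where $a_k$ and $b_k$ lie in sets with the \emph{same} index $i$ but $a_k \neq b_k$ as elements of $\Lambda^+ \cup \partial\Lambda^+$; WLOG $a_k < b_k$ in the lexicographic order (or they are incomparable componentwise, in the $\partial\Lambda^+$ case). Since $\Lambda = \mathbb{Z}^n$ with the lexicographic order is densely-unbounded-below-style only in a limited sense, I cannot split "between" $a_k$ and $b_k$ directly; instead I would use a difference-of-cylinders set. Pick $c \in \Lambda^+$ with $a_k < a_k + c \preceq$-compatible and $a_k + c$ still $< b_k$ if possible, or more robustly: set $q := p\,a_k$ (truncating/interpreting at level $k$) and observe $x \in V(q)$ while $y \in V(q)$ too only if $q \preceq y$; the point is that $y$ continues past $a_k$ with a \emph{strictly larger} $k$-th letter, so $y \notin V(p\,b_k')$ situations can be arranged. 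Concretely, $A := V(p\,a_k\,; \, p\,b_k)$ and $B := V(p\,b_k)$: then $y \in B$, $x \in A$ (since $x$ extends $p\,a_k$ but, having $a_k \neq b_k$ at level $k$, does not extend $p\,b_k$), and $A \cap B = V(p\,a_k) \cap V(p\,b_k) \setminus V(p\,b_k) = \emptyset$ trivially, using $p\,a_k \preceq p\,b_k$ need NOT hold — so I would instead observe directly $A \subseteq V(p\,a_k)$ is disjoint from $V(p\,b_k) = B$ by \eqref{eq11:eps}. I would close by remarking that the case $k$ exceeding the length of one of the finite words, and the sub-case $a_k \in \partial\Lambda^+$ with incomparable components, are handled identically after truncating that word to $X$ and enlarging $p$. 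The crux throughout is simply that distinct elements of $\partial X$ have a well-defined earliest divergence, after which one cylinder (or a difference of two) isolates each.
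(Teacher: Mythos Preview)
Your strategy---locate the first index $k$ of disagreement, set $p=a_1\cdots a_{k-1}$, and separate with sets from $\mathcal{E}$ built on $p$---is the same as the paper's. But there is one concrete error and one genuine gap.

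The error is in the same-index case, where you fall back on the claim that $V(pa_k)$ is disjoint from $V(pb_k)$. This is false: if $a_k,b_k\in G_i$ with $a_k<b_k$, then $b_k-a_k\in G_i$ and $pb_k=(pa_k)(b_k-a_k)$, so $pa_k\preceq pb_k$ and hence $V(pb_k)\subseteq V(pa_k)$. Your first choice $A=V(pa_k;pb_k)$, $B=V(pb_k)$ was already correct and needed no fallback; the relation $pa_k\preceq pb_k$ \emph{does} hold, so $A\in\mathcal{E}$ and $A\cap B=\emptyset$ automatically.

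The gap is the situation where $a_k\in\partial G_i$, i.e.\ $x$ terminates at position $k$. Then $pa_k\notin X$, $V(pa_k)$ is undefined, and your construction does not apply; ``truncating to $X$ and enlarging $p$'' is not possible, since $p=a_1\cdots a_{k-1}$ is already the maximal prefix of $x$ lying in $X$. This is exactly where the real content of the lemma lives, and the paper spends its Case~II on it. If $y$ also terminates at position $k$ with $b_k\in\partial G_i$ and, say, $a_k<b_k$, one manufactures a finite $c\in G_i$ lying strictly between them (using the coordinates of $a_k,b_k$) and takes $A=V(p;pc)$, $B=V(pc)$. If instead $y$ continues past position $k$ (so $b_k\in G_i$ and $b_{k+1}$ exists), one takes $c=e_n$ in the \emph{opposite} index-set $G_{i\pm1}$ and uses $A=V(p;pb_kc)$, $B=V(pb_kc)$; the extra letter $c$ in the other index-set is what forces $pb_kc\npreceq x$ even when $pb_k\preceq x$. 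Your sketch does not supply these constructions, and they do not reduce to the finite-letter case by any truncation.
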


\begin{proof}

Suppose $x,y \in \partial X$ and $x \neq y$.  Let $x = a_1a_2\ldots a_s$, $y = b_1b_2\ldots b_m$.  Assume, without loss of generality, that $s \leq m$.  We consider two cases:

\noindent Case I. there exists $k < s $ such that $a_k \neq b_k$ (or they
belong to different $G_i$'s).  Then $x \in V(a_1a_2\ldots a_k),
~~y \in V(b_1b_2\ldots b_k)$ and $V(a_1a_2\ldots a_k) \cap
V(b_1b_2\ldots b_k) = \emptyset$.

\noindent Case II. $a_i = b_i ~~ \textrm{for each } i < s $.  Notice that
if $s = \infty$, that is, if both $x$ and $y$ are infinite sequences
then there should be a $k \in \mathds{N}$ such that $a_k \neq b_k$
which was considered in case I.  Hence $s < \infty$.  Again, we
distinguish two subcases:

\begin{enumerate}

\item [a)] $s = m$. Therefore $x = a_1a_2\ldots a_s$ and $y =
a_1a_2\ldots b_s$, and $a_s,b_s \in \partial G_i$, with $a_s \neq
b_s$.  Assuming, without loss of generality, that $a_s < b_s$, let
$a_s = (k_1, k_2, \ldots, k_{n-1}, \infty)$, and $b_s = (r_1, r_2, \ldots, r_{n-1}, \infty)$ where $(k_1, k_2, \ldots, k_{n-1}) <
(r_1, r_2, \ldots, r_{n-1})$.  Therefore there must be an index $i$ such that $k_i < r_i$;  let $j$ be the largest such.  Hence $a_s + e_j \leq b_s$, where $e_j$ is the $n$-tuple with 1 at the $j^{th}$ spot and 0 elsewhere.
Letting $c = a_s + e_j$, we see that $x \in A = V(a_1a_2\ldots
a_{s-1};c),~~ y \in B = V(c)$, and $A \cap B  = \emptyset$.

\item [b)] $s < m$.  Then $y = a_1a_2 \ldots
a_{s-1}b_sb_{s+1}\ldots b_m$ ($m \geq s +1$).

Since $b_{s+1} \in (G_i \cup \partial G_i) \setminus \{0\}$ for $i
= 1,2$, choose $c = e_n \in G_i$ (same index as $b_{s+1}$ is
in). Then $x \in A = V(a_1a_2\ldots a_{s-1};a_1a_2\ldots
a_{s-1}b_sc)$, $y \in B = V(a_1a_2\ldots a_{s-1}b_sc)$, and $A
\cap B = \emptyset$.

\end{enumerate}

\noindent This completes the proof.\\
\end{proof}

\begin{lemma}
$\mathcal{E}$ forms a base of compact open sets for a locally
compact Hausdorff topology on $\partial X$.
\end{lemma}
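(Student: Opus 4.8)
The plan is to verify the standard criteria for a family of subsets of a set to be a base for a topology, then check that this topology is locally compact, Hausdorff, and that the basic sets are compact open. The key observations are: (1) $\mathcal{E}$ is closed under finite intersections up to taking finite unions, actually one must be careful — $V(p)$, $V(p;q)$, and their intersections — so the real content is that finite intersections of members of $\mathcal{E}$ are again (finite unions of) members of $\mathcal{E}$; (2) $\mathcal{E}$ covers $\partial X$; (3) Lemma \ref{Eseparats.points} gives Hausdorffness; (4) compactness of each $V(p)$ follows from a König's-lemma-type argument adapted to the $\mathds{Z}^n$ setting, and then $V(p;q)$ is compact as a closed subset (it is also open, being a difference of a set and... no — rather $V(p;q) = V(p)\cap V(q)^c$, and $V(q)$ is open so $V(q)^c$ is closed, hence $V(p;q)$ is closed in the compact set $V(p)$; it is open because we will have shown the $V$'s and $V(p;q)$'s form a base).

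First I would check the base axioms. Covering is immediate since $\partial X = V(0)$. For intersections: given two basic sets $A,B$ and a point $x\in A\cap B$, I must find $C\in\mathcal{E}$ with $x\in C\subseteq A\cap B$. Using \eqref{eq11:eps}, $V(p)\cap V(q)$ is either empty or equals $V(p)$ or $V(q)$, so intersections of two $V$'s are already in $\mathcal{E}$. For $V(p)\cap V(q;r)$: this equals $V(p)\cap V(q)\setminus V(r)$; by \eqref{eq11:eps} $V(p)\cap V(q)$ is $V(p)$ or $V(q)$ (say $V(p)$ after relabeling, assuming nonempty), so we get $V(p)\setminus V(r) = V(p;r)$ when $p\preceq r$, or $V(p)$ when $r\npreceq p$ and $p\npreceq r$, or $\emptyset$ when $r\preceq p$; in every case it lies in $\mathcal{E}$. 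For $V(p;q)\cap V(r;s) = V(p)\cap V(r)\setminus(V(q)\cup V(s))$: reduce $V(p)\cap V(r)$ to a single $V(t)$ as before, so we need $V(t)\setminus(V(q)\cup V(s))$. This need not be a single member of $\mathcal{E}$, but I only need a basic set \emph{inside} it containing the given point $x$. Since $x\in V(t)$ and $x\notin V(q)$, $x\notin V(s)$, and the $V$'s behave like a tree (a path either extends $t$ or diverges from it), I can find a sufficiently long initial segment $p'$ of $x$ with $V(p')\subseteq V(t)$ and $V(p')$ disjoint from both $V(q)$ and $V(s)$ — or more simply use that $x\in V(t;q)$ already handles avoiding $q$, and then remove $s$ as well by passing to $V(t;q)\setminus V(s)$, choosing a slightly longer prefix; I would spell this out using the separation argument already carried out in Lemma \ref{Eseparats.points}, since that lemma essentially constructs, around any point and avoiding any other point, a basic set.

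Next, Hausdorff is exactly Lemma \ref{Eseparats.points}. For local compactness it suffices to show each $V(p)$ is compact, since the $V(p)$ form a neighborhood base. Here is the main obstacle: proving $V(p)$ compact. I would argue by a diagonal/König argument suited to $\Lambda = \mathds{Z}^n$ with lexicographic order. Given a cover of $V(p)$ by basic sets, suppose no finite subcover exists; I would build a decreasing sequence $V(p) \supseteq V(p_1) \supseteq V(p_2)\supseteq\cdots$ of basic open sets, each with no finite subcover, where at each stage I either extend by appending a generator from the appropriate $G_i$ or — and this is the subtlety absent in the $\mathds{Z}$ case — refine \emph{within} a single $\Lambda$-coordinate using the lexicographic structure (the sets $V(a_1\cdots a_{s-1};c)$ from the proof of Lemma \ref{Eseparats.points}). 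The point is that the "children" of a node $V(p)$, in the sense of a finite partition into smaller basic sets, are finite in number at each level because $\Lambda^+$ modulo a generator is suitably finite in the relevant sense: partitioning $V(p)$ according to the first coordinate of the next letter's leading $\mathds{Z}$-component, etc. Eventually this nested sequence determines a point $x\in V(p)$ (using that $\partial\Lambda^+$ includes the $\infty$-points, so the construction always converges to an element of $\partial X$), and $x$ lies in some set $W$ of the cover; but then $W$ contains some $V(p_k)$, contradicting that $V(p_k)$ has no finite subcover. Finally, $V(p;q)$ is closed in $V(p)$ (its complement $V(q)$ is open) hence compact, and it is open since $\mathcal{E}$ is a base; so every member of $\mathcal{E}$ is compact open and the topology is totally disconnected as well. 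I expect the compactness argument — correctly organizing the finite refinement at each level given the lexicographic order and the boundary points $\partial\Lambda^+$ — to be the only genuinely delicate point; everything else is bookkeeping with \eqref{eq11:eps} and Lemma \ref{Eseparats.points}.
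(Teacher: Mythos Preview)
Your overall strategy matches the paper's: verify the base axioms, invoke the separation lemma for Hausdorffness, and prove each $V(p)$ compact by a K\"onig-type descent. However, there is a genuine gap in your compactness argument at the final step: ``$x$ lies in some set $W$ of the cover; but then $W$ contains some $V(p_k)$.'' When $W = V(q;r)$, containing $V(p_k)$ requires $V(p_k)\cap V(r)=\emptyset$. But the limit point $x$ can satisfy $x \preceq r$: if your descent only appends $e_n$'s, $x$ may be a finite $\partial X$-word ending in $(0,\ldots,0,\infty)\in\partial G_i$, and then any $r\in X$ whose corresponding letter has some $l_i\ge 1$ for $i\le n-1$ extends every $X$-prefix of $x$. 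In that situation $p_k \preceq r$ for \emph{all} $k$, so $V(r)\subseteq V(p_k)$ and no $V(p_k)$ sits inside $W$. The paper closes this with a two-case dichotomy you do not have: either (Case~II) at every stage one can choose an extension $p_{s+1}$ with $l_i(p_{s+1})\ge 1$ for some $i\le n-1$ that is not finitely covered, forcing $l_{i_0}(x)=\infty$ for some $i_0\le n-1$ so that $x\npreceq r$ is automatic; or (Case~I) no such extension exists, and one descends via the \emph{binary} partition $V(\mu)=V(\mu e_n')\cup V(\mu e_n'')$ with $e_n'\in G_1$, $e_n''\in G_2$. In Case~I, when $x\preceq r$ does occur, the Case~I hypothesis guarantees $V(r)$ itself \emph{is} finitely covered, and then $V(p_k)\subseteq V(q)=V(q;r)\cup V(r)$ for large $k$ yields the contradiction. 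Your sketch identifies neither the binary partition nor this split, and without them the argument does not close.

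A smaller version of the same issue appears in your base verification: ``find a sufficiently long initial segment $p'$ of $x$ with $V(p')$ disjoint from both $V(q)$ and $V(s)$'' fails precisely when $x\preceq q$ or $x\preceq s$. The paper handles this explicitly: writing $p_2=rby$ with $b\in\Lambda^+$, it sets $b'=b-e_n$ and $s_1=rb'$, so that $x\preceq s_1\prec p_2$, and uses $V(r;s_1)$ (and similarly $s_2$ for $q_2$, then takes the shorter of $s_1,s_2$). Your fallback ``use that $x\in V(t;q)$'' is moving in the right direction, but $V(t;q)\setminus V(s)$ is not in $\mathcal{E}$, so you still need this explicit construction of an intermediate endpoint to land back in the base.
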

\begin{proof}
First we prove that $\mathcal{E}$ forms a base. Let $A = V(p_1;p_2)$ and $B=V(q_1;q_2)$.  Notice that if $p_1 \npreceq q_1$ and $q_1 \npreceq p_1$ then $A \cap B = \emptyset$. Suppose,
without loss of generality, that $p_1 \preceq q_1$ and let $x \in A
\cap B$.  Then by construction, $p_1 \preceq q_1 \preceq x$ and $p_2 \npreceq x$ and $q_2 \npreceq x$.  Since $p_2 \npreceq x$ and $q_2 \npreceq x$, we can choose $r \in
X$ such that $q_1 \preceq r$, $p_2 \npreceq r$, $q_2
\npreceq r$, and $x = ra$ for some $a \in \partial X$. If $x \npreceq p_2$ and $x \npreceq q_2$ then $r$ can be chosen so
that $r \npreceq p_2$ and $r \npreceq q_2$, hence $x \in V(r)
\subseteq A \cap B$.

Suppose now that $x \preceq p_2$.  Then $x = ra$, for some $r \in X$ and $a \in \partial X$.  By extending $r$ if necessary, we may assume that $a \in \partial \Lambda^+$.  Then we may write $p_2 = rby$ for some $b \in \Lambda^+$, and $y \in \partial X$ with $a < b$.  Let $b' = b - (0,\ldots, 0, 1)$, and
$s_1 = rb'$.  Notice that $x \preceq s_1
\preceq p_2$ and $s_1 \neq p_2$.  If $x \npreceq q_2$ then we can choose $r$ so that $r \npreceq q_2$.  Therefore $x \in V(r;s_1) \subseteq A \cap B$. If $x \preceq q_2$, construct $s_2$ the way as $s_1$ was
constructed, where $q_2$ takes the place of $p_2$.  Then either $s_1
\preceq s_2$ or $s_2 \preceq s_1$.  Set  \begin{displaymath}
s = \left\{ \begin{array}{ll}
        s_1 & \textrm{if $s_1 \preceq s_2$}\\
        s_2 & \textrm{if $s_2 \preceq s_1$}.
        \end{array} \right.
\end{displaymath} Then $x \in V(r;s) \subseteq A \cap B$. The cases when $A$ or $B$ is of the form $V(p)$ are similar, in fact easier.

That the topology is Hausdorff follows from the fact that $\mathcal{E}$ separates points.

Next we prove local compactness.  Given $p,q \in X$ we need to prove that $V(p;q)$ is compact. Since
$V(p;q) = V(p) \setminus V(q)$ is a (relatively) closed subset of
$V(p)$, it suffices to show that $V(p)$ is compact.
Let $A_0 = V(p)$ be covered by an open cover $\mathcal{U}$
and suppose that $A_0$ does not admit a finite subcover. Choose
$p_1 \in X$ such that $l_i(p_1) \geq 1$ and $V(pp_1)$ does not
admit a finite subcover, for some $i \in \{1,\ldots, n-1\}$.  We consider two cases:

\noindent Case I.  Suppose no such $p_1$ exists. Let $a = e_n \in G_1,~~ b = e_n \in G_2$.  Then $V(p) = V(pa)
\cup V(pb)$.  Hence either $V(pa)$ or $V(pb)$ is not finitely
covered, say $V(pa)$, then let $x_1 = a$. After choosing $x_s$, since $V(px_1\ldots x_s) = V(px_1\ldots
x_sa) \cup V(px_1\ldots x_sb)$, either $V(px_1\ldots x_sa)$ or
$V(px_1\ldots x_sb)$ is not finitely covered. And we let $x_{s+1}
= a$ or $b$ accordingly.
Now let $A_j = V(px_1\ldots x_j)$ for $j \geq 1$ and let $x =
px_1x_2\ldots \in \partial X$.  Notice that $A_0 \supseteq A_1
\supseteq A_2 \ldots$, and $x \in \bigcap_{j=0}^\infty A_j$.
Choose $A' \in \mathcal{U}$, $q,r \in X$, such that $x \in V(q;r) \subseteq A'$. Clearly
$q \preceq x$ and $r \npreceq x$. Once again, we distinguish two subcases:
\begin{enumerate}

\item [a)] $x \npreceq r$.  Then, for a large enough $k$ we get $q
\preceq px_1x_2\ldots x_k$ and $px_1x_2\ldots x_k \npreceq r$.
Therefore $A_k = V(px_1x_2\ldots x_k) \subseteq A'$, which
contradicts to that $A_k$ is not finitely covered.

\item [b)] $x \preceq r$.  Notice $l_1(x) = l_1(p)$ and since $x =
px_1x_2 \ldots \preceq r$, we have $l_1(x) = l_1(p) < l_1(r)$.  Therefore
$V(r)$ is finitely covered, say by $B_1,B_2,\ldots, B_s \in
\mathcal{U}$. For large enough $k$, $q \preceq px_1x_2 \ldots
x_k$. Therefore $A_k = V(px_1x_2 \ldots x_k) \subseteq V(q) =
V(q;r) \cup V(r) \subseteq A' \cup \bigcup_{j=1}^nB_j$, which is a
finite union.  This is a contradiction.

\end{enumerate}

\noindent Case II. Let $p_1 \in X$ such that $l_i(p_1) \geq 1$ and $V(pp_1)$
is not finitely covered, for some $i \in \{1,\ldots, n-1\}$.  After choosing $p_1, \ldots, p_s$ let $p_{s+1}$ be such that
$l_i(p_{s+1}) \geq 1$ and $V(pp_1\ldots p_{s+1})$ is not finitely
covered, for some $i \in \{1,\ldots, n-1\}$.  If no such $p_{s+1}$ exists then we are back in to case
I with $V(pp_1p_2\ldots p_s)$ playing the role of $V(p)$.
Now let $x = pp_1p_2\ldots \in \partial X$ and let $A_j =
V(pp_1\ldots p_j)$.  We get $A_0 \supseteq A_1 \supseteq \ldots$, and
$x = pp_1p_2 \ldots \in \bigcap_{j=0}^\infty A_j$.  Choose $A' \in \mathcal{U}$ such that $x \in V(q;r) \subseteq A'$.
Notice that $q \preceq x$ and $n-1$ is finite, hence there exists $i_0 \in \{1,\ldots,n-1\}$ such that $l_{i_0}(x) = \infty$.  Since $l_{i_0}(r) < \infty$, we have $x \npreceq r$.  Therefore, for large
enough $k$, $q \preceq pp_1\ldots p_k \npreceq r$, implying
$A_k \subseteq A'$, a contradiction.

\noindent Therefore $V(p)$ is compact.
\end{proof}

\section{The groupoid and $C^*$-algebra of the
$\mathds{Z}^n$-tree} \label{sect.z23}

We are now ready to form the groupoid which will eventually be
used to construct the $C^*$-algebra of the $\Lambda$-tree.

For $x,y \in \partial X$ and $k \in \Lambda$, we write $x
\thicksim_k y$ if there exist $p,q \in X$ and $z \in \partial X$
such that $k = l(p) - l(q)$ and $x = pz, y = qz$.

Notice that:

\begin{enumerate}

\item If $x \thicksim_k y$ then $y \thicksim_{-k} x$.

\item $x \thicksim_0 x$.

\item If $x \thicksim_k y$ and $y \thicksim_m z$ then $x = \mu t,
~~ y = \nu t, ~~ y = \eta s,~~ z = \beta s$ for some $\mu, \nu,
\eta, \beta \in X~~ t,s \in \partial X$ and $k = l(\mu) -
l(\nu),~~ m = l(\eta) - l(\beta)$.

If $l(\eta) \leq l(\nu)$ then $\nu = \eta \delta$ for some $\delta
\in X$.  Therefore $y = \eta \delta t$, implying $s = \delta t$,
hence $z = \beta \delta t$. Therefore $x \thicksim_r z$, where $r
= l(\mu) - l(\beta \delta) = l(\mu) - l(\beta) - l(\delta) =
l(\mu) - l(\beta) - (l(\nu) - l(\eta)) = [l(\mu) - l(\nu)] +
[l(\eta) - l(\beta)] = k + m$.

Similarly, if $l(\eta) \geq l(\nu)$ we get $x \thicksim_r z$,
where $r = k + m$.

\end{enumerate}

\begin{definition}

Let $\mathcal{G} := \{(x,k,y) \in  \partial X \times \Lambda \times
\partial X : x \thicksim_k y \}$.

For pairs in $\mathcal{G}^2 := \{((x,k,y),(y,m,z)) :
(x,k,y),(y,m,z) \in \mathcal{G}\}$, we define

\begin{equation}
\label{eq1:eps} (x,k,y) \cdot (y,m,z) = (x, k+m, z).
\end{equation}

For arbitrary $(x,k,y) \in \mathcal{G}$, we define
\begin{equation} \label{eq2:eps}(x,k,y)^{-1} = (y, -k, x). \end{equation}
\end{definition}

With the operations \eqref{eq1:eps} and \eqref{eq2:eps}, and source and range maps $s,r:\mathcal{G} \longrightarrow \partial X$ given
by $s(x,k,y) = y, ~~ r(x,k,y) = x,~~ \mathcal{\mathcal{G}}$ is a
groupoid with unit space $\partial X$.

We want to make $\mathcal{G}$ a locally compact $r$-discrete
groupoid with (topological) unit space $\partial X$.

For $p,q \in X$ and $A \in \mathcal{E}$, define $[p,q]_A = \{(px, l(p)-l(q), qx):x \in A\}.$

\begin{lemma} \label{lemm.501} For $p,q,r,s \in X$ and $A, B \in
\mathcal{E}$,
\begin{displaymath}
[p,q]_A \cap [r,s]_B = \left\{ \begin{array}{ll}
        [p,q]_{A\cap\mu B} & \textrm{if there exists $\mu \in X$ such that $r = p\mu,~~ s=q\mu$}\\
        {[r,s]_{(\mu A) \cap B}} & \textrm{if there exists $\mu \in X$ such that $p = r\mu,~~ q=s\mu$}\\
        \emptyset & \textrm{otherwise}.
        \end{array} \right.
\end{displaymath}

\end{lemma}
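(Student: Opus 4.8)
The plan is to compute the intersection directly from the definitions. An element of $[p,q]_A$ has the form $(px, l(p)-l(q), qx)$ for some $x \in A$, and an element of $[r,s]_B$ has the form $(ry, l(r)-l(s), sy)$ for some $y \in B$. If a triple lies in both sets, then matching the three coordinates gives $px = ry$, $sy = qx$, and $l(p)-l(q) = l(r)-l(s)$. So first I would analyze the equation $px = ry$ in $\partial X$. Because $p,r \in X$ are finite words and $x,y \in \partial X$, the word $px$ (respectively $ry$) begins with $p$ (respectively $r$), so $p$ and $r$ must be comparable under $\preceq$; say, without loss of generality, $p \preceq r$, i.e.\ $r = p\mu$ for some $\mu \in X$. (One must be slightly careful here because of the group-law concatenation in $\Lambda * \Lambda$: "begins with $p$" has to be read in terms of the partial order $\preceq$ introduced in Section~2, not literal prefixes, but the cancellation property of a $\Lambda$-tree makes the comparison work.) Substituting $r = p\mu$ into $px = ry = p\mu y$ and cancelling $p$ on the left yields $x = \mu y$; in particular $x \in V(\mu)$, so $y$ ranges over exactly those elements of $B$ with $\mu y \in A$, i.e.\ $y \in (\mu^{-1}A) \cap B$, which I would write as $y$ such that $\mu y \in A$, matching the notation $\mu A$ used in the statement once one checks $V(\mu) \supseteq A$ forces the relevant containment, equivalently rewriting $A \cap \mu B$ appropriately.

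Next I would feed the remaining two coordinate equations back in. From $l(p)-l(q) = l(r)-l(s) = l(p\mu) - l(s) = l(p) + l(\mu) - l(s)$ and additivity of $l$ (established in Section~2), we get $l(s) = l(q) + l(\mu) = l(q\mu)$. Similarly, the second coordinate equation $sy = qx = q\mu y$ combined with $l(s) = l(q\mu)$ should force $s = q\mu$: indeed $sy = (q\mu)y$ with $s$ and $q\mu$ both in $X$ and of equal length forces $s = q\mu$ by the tree/cancellation structure (two finite words of the same length with a common extension into $\partial X$ must coincide). This is exactly the hypothesis "$r = p\mu,\ s = q\mu$" in the first case of the lemma. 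Conversely, if $r = p\mu$ and $s = q\mu$, then $[r,s]_B = \{(p\mu y, l(p)-l(q), q\mu y) : y \in B\}$, and intersecting with $[p,q]_A = \{(px, l(p)-l(q), qx) : x \in A\}$ just amounts to requiring $x = \mu y$ with $x \in A$ and $y \in B$, i.e.\ the index set is $A \cap \mu B$ — giving $[p,q]_{A \cap \mu B}$. The symmetric case $r \preceq p$ gives the second line, and if $p,r$ are incomparable the equation $px = ry$ has no solution, giving $\emptyset$.

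The main obstacle I expect is purely bookkeeping: making the "cancellation" arguments precise in the setting where concatenation uses the group law in $\Lambda * \Lambda$ (so that, e.g., $px$ need not have $p$ as a literal initial segment when the last letter of $p$ and first letter of $x$ lie in the same $G_i$). I would handle this by consistently phrasing everything through the partial order $\preceq$ and the length function $l$ from Section~2, and by isolating once and for all the fact that, in the $\Lambda$-tree $X$, if $uz = u'z'$ with $u,u' \in X$, $z,z' \in \partial X$ and $l(u) = l(u')$, then $u = u'$ and $z = z'$, and more generally if $uz = u'z'$ then $u,u'$ are $\preceq$-comparable. Once that cancellation lemma is in hand, the three cases fall out mechanically by matching coordinates as above, and the identification of the index set with $A \cap \mu B$ (resp.\ $(\mu A) \cap B$) is immediate from the definition $V(\mu y) = \mu V(y)$-style relations among the basic sets in $\mathcal{E}$.
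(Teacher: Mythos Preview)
Your proposal is correct and follows essentially the same route as the paper: match the three coordinates of a common element, use comparability of $p$ and $r$ (via lengths) to write $r=p\mu$, cancel to get $x=\mu y$, then conclude $s=q\mu$ and read off the index set $A\cap\mu B$. The paper's argument is terser---it asserts $q\mu=s$ directly from $q\mu y=sy$ without isolating the equal-length cancellation lemma you spell out---so your extra care about cancellation in the $\Lambda$-tree is appropriate but not a different method.
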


\begin{proof}
Let $t \in [p,q]_A \cap [r,s]_B$. Then $t = (px, k, qx) = (ry, m,
sy)$ for some $x \in A, y \in B$.  Clearly $k = m$. Furthermore,
$px = ry$ and $qx = sy$.  Suppose that $l(p) \leq l(r)$. Then $r =
p\mu$ for some $\mu \in X$, hence $px = p\mu y$, implying $x = \mu
y$. Hence $qx = q\mu y = sy$, implying $q\mu = s$.
Therefore $t = (px,k,qx) = (p\mu y, k ,q\mu y)$, that is, $t =
(px, k ,qx)$ for some $x \in A \cap \mu B$.
The case when $l(r) \leq l(p)$ follows by symmetry.
The reverse containment is clear.
\end{proof}

\begin{proposition}
Let $\mathcal{G}$ have the relative topology inherited from $\partial X \times \Lambda
\times \partial X$. Then $\mathcal{G}$ is a locally compact Housdorff groupoid, with base $\mathcal{D} = \{[a,b]_A:a,b \in X, A \in \mathcal{E}\}$ consisting of compact open subsets.
\end{proposition}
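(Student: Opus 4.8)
The plan is to verify, in order: that $\mathcal{D}$ is a base for a topology on $\mathcal{G}$; that this topology agrees with the one $\mathcal{G}$ inherits from $\partial X\times\Lambda\times\partial X$; that each $[a,b]_A$ is compact, so $\mathcal{G}$ is locally compact and $\mathcal{D}$ consists of compact open sets; that $\mathcal{G}$ is Hausdorff; and that inversion and multiplication are continuous.

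That $\bigcup\mathcal{D}=\mathcal{G}$ is immediate: if $(x,k,y)\in\mathcal{G}$ then $x=pz$, $y=qz$, $k=l(p)-l(q)$ for some $p,q\in X$, $z\in\partial X$, so $(x,k,y)\in[p,q]_{V(0)}$. Closure under intersection is exactly Lemma~\ref{lemm.501}: $[p,q]_A\cap[r,s]_B$ equals $\emptyset$, or $[p,q]_{A\cap\mu B}$, or $[r,s]_{(\mu A)\cap B}$ for a suitable $\mu\in X$. Since $\mu V(c)=V(\mu c)$ and $\mu V(c;d)=V(\mu c;\mu d)$ one has $\mu B\in\mathcal{E}$, and since $\mathcal{E}$ is a base of the topology of $\partial X$ the set $A\cap\mu B$ is a union of members of $\mathcal{E}$; hence $[p,q]_{A\cap\mu B}$ is a union of members of $\mathcal{D}$, and $\mathcal{D}$ is a base.

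To match the resulting topology with the relative topology one checks two inclusions. For one direction, a subbasic relatively open set is $(V(c)\times\{k\}\times V(d))\cap\mathcal{G}$, or has $V(c;d)$ in place of a cylinder ($\Lambda$ being discrete); at a point $(pz,k,qz)$ of it, the sets $\{w:pw\in V(c)\}$ and $\{w:qw\in V(d)\}$ are each $\partial X$ or a cylinder (depending on the $\preceq$-comparison of $c$ with $p$, resp.\ $d$ with $q$), so some $A\in\mathcal{E}$ with $z\in A$ lies in their intersection and $[p,q]_A$ is a $\mathcal{D}$-neighbourhood of the point inside the given set. The other direction, that each $[a,b]_A$ is relatively open, is the hard part. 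The natural tool is the bijection $\iota_{a,b}\colon A\to[a,b]_A$, $w\mapsto(aw,\,l(a)-l(b),\,bw)$, which is a homeomorphism onto its image ($w\mapsto aw$ and $w\mapsto bw$ are restrictions of the homeomorphisms of $\partial X$ onto $V(a)$ and $V(b)$, and the inverse deletes the prefix $a$); but a homeomorphism onto the image does not by itself make that image open. One must genuinely show that a sufficiently small product neighbourhood of $(ax,\,l(a)-l(b),\,bx)$ meets $\mathcal{G}$ only inside $[a,b]_A$; equivalently, that $aw\sim_{l(a)-l(b)}bw'$ together with $aw$ near $ax$ and $bw'$ near $bx$ forces $w=w'$ (and $w\in A$). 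Because $l$ takes values in $\mathds{Z}^n$ with only the componentwise partial order, the relation $aw\sim_{l(a)-l(b)}bw'$ has to be analysed into cases according to the $\preceq$-comparisons of the witnessing prefixes --- a refinement of the computation already carried out for transitivity of $\sim$ --- and the order structure of $\Lambda$ and of $\partial\Lambda^+$ must be used to exclude the "shifted" solutions; choosing the separating neighbourhoods among the sets $V(\,\cdot\,;\,\cdot\,)$ is the delicate point. This is where I expect the real obstacle to lie.

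With the topology in hand the remaining points are routine. Via $\iota_{a,b}$ each $[a,b]_A$ is homeomorphic to $A$, which is compact by Section~\ref{sect.z22}, so $\mathcal{G}$ is locally compact and $\mathcal{D}$ consists of compact open sets; Hausdorffness of $\mathcal{G}$ passes down from that of $\partial X\times\Lambda\times\partial X$ (the relative topology being Hausdorff and coarser than the $\mathcal{D}$-topology). Inversion sends $[a,b]_A$ homeomorphically onto $[b,a]_A$, hence is continuous; the range and source maps restrict on $[a,b]_A$ to $w\mapsto aw$ and $w\mapsto bw$, homeomorphisms onto the open sets $aA,bA\subseteq\partial X$, so $\mathcal{G}$ is $r$-discrete; and continuity of multiplication is checked on basic sets, writing $[a,b]_A\cdot[c,d]_B$, when nonempty, as a union of sets of the same form determined by the overlap of $b$ and $c$ --- once more an application of Lemma~\ref{lemm.501}.
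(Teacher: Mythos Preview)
Your outline is more scrupulous than the paper's own proof, and your instinct about where the real difficulty lies is exactly right --- so right, in fact, that the step cannot be completed: the sets $[a,b]_A$ are \emph{not} in general open in the relative topology inherited from $\partial X\times\Lambda\times\partial X$. Take $n=2$, $a=(1,0)\in G_1$, $b=(1,0)\in G_2$, and $x=c_1c_2\ldots$ with every $c_i=(1,1)$, the indices alternating. For $m\ge 3$ set $r_m=c_1\cdots c_m$, $s_m=c_1\cdots c_{m-2}(c_{m-1}+e_2)(c_m-e_2)$ and $z_m=c_{m+1}c_{m+2}\ldots$; then $l(r_m)=l(s_m)$ while $r_m\ne s_m$, and $r_mz_m=x$. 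The triple $(ax,\,0,\,bs_mz_m)$ lies in $\mathcal G$ (witnessed by $ar_m,\,bs_m,\,z_m$, since $l(a)=l(b)$); one checks $c_1\cdots c_{m-1}\preceq s_mz_m$, so $bs_mz_m\to bx$ and hence $(ax,0,bs_mz_m)\to(ax,0,bx)$ in the product topology. But $(ax,0,bs_mz_m)\notin[a,b]_{\partial X}$, because that would force $s_mz_m=x$. Thus no product neighbourhood of $(ax,0,bx)$ meets $\mathcal G$ only inside $[a,b]_{\partial X}$, and the case analysis you sketch for ``excluding the shifted solutions'' cannot succeed.

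The paper's proof does not address this point either: after noting that $[a,b]_A$ is closed in the compact set $aA\times\{l(a)-l(b)\}\times bA$ it simply writes ``hence $[a,b]_A$ is compact open in $\mathcal G$'', which yields compactness but not relative openness. What the argument actually establishes --- and what the remainder of the paper uses --- is that $\mathcal D$ is a base for a locally compact Hausdorff groupoid topology in its own right, strictly finer than the relative one; the phrase ``relative topology'' in the statement should be read as a slip. With that reading your proof is essentially complete and in fact cleaner than the paper's: your map $\iota_{a,b}$ is a homeomorphism from $A$ onto $[a,b]_A$ for the $\mathcal D$-topology (Lemma~\ref{lemm.501} shows both $\iota_{a,b}$ and its inverse carry basic opens to basic opens), giving compactness directly without appeal to the ambient product; Hausdorffness still follows because the $\mathcal D$-topology refines a Hausdorff one; and your treatment of inversion and multiplication goes through verbatim. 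The only paragraph to discard is the one attempting to identify the two topologies.
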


\begin{proof}
That $\mathcal{D}$ is a base follows from Lemma \ref{lemm.501}.
$[a,b]_A$ is a closed subset of $aA \times \{l(a)-l(b)\}\times
bA$, which is a compact open subset of $\partial X \times \Lambda
\times \partial X$.  Hence $[a,b]_A$ is compact open in $\mathcal{G}$.

To prove that inversion is continuous, let
$\phi:\mathcal{G} \longrightarrow \mathcal{G}$ be the inversion
function.  Then $\phi^{-1}([a,b]_A) = [b,a]_A$.  Therefore $\phi$
is continuous.  In fact $\phi$ is a homeomorphism.

For the product function, let $\psi:\mathcal{G}^2 \longrightarrow
\mathcal{G}$ be the product function.  Then
$\displaystyle{\psi^{-1}([a,b]_A) = \bigcup_{c \in X}(([a,c]_A
\times [c,b]_A) \cap \mathcal{G}^2)}$ which is open (is a union of
open sets).
\end{proof}

\begin{remark} \label{remk.501} We remark the following points:

\begin{enumerate}

\item [(a)] Since the set $\mathcal{D}$ is countable, the topology
is second countable.

\item [(b)] We can identify the unit space, $\partial X$, of
$\mathcal{G}$
 with the subset $\{(x,0,x):x \in
\partial X\}$ of $\mathcal{G}$ via $x \mapsto (x,0,x)$.  The
topology on $\partial X$ agrees with the topology it inherits by
viewing it as the subset $\{(x,0,x):x \in \partial X\}$ of
$\mathcal{G}$.
\end{enumerate}

\end{remark}

\begin{proposition} \label{prop.501} For each $A \in \mathcal{E}$
and each $a,b \in X$, $[a,b]_A$ is a $\mathcal{G}$-set.
$\mathcal{G}$ is $r$-discrete.
\end{proposition}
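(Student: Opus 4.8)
The plan is to verify the definition of a $\mathcal{G}$-set directly from the basis sets and then deduce $r$-discreteness as a consequence. Recall that a subset $S$ of a groupoid is a $\mathcal{G}$-set if the restrictions $r|_S$ and $s|_S$ are injective; equivalently, if $SS^{-1}$ and $S^{-1}S$ are contained in the unit space. First I would show that the range and source maps are injective on $[a,b]_A$. Given two elements $(ax, l(a)-l(b), bx)$ and $(ax', l(a)-l(b), bx')$ of $[a,b]_A$ with equal range, we have $ax = ax'$; since concatenation with the fixed word $a$ on the left is injective on $\partial X$ (reading off the tail after the prefix $a$), this forces $x = x'$, so the two elements coincide. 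The identical argument with $b$ in place of $a$ handles the source map. Hence $r|_{[a,b]_A}$ and $s|_{[a,b]_A}$ are injective, so $[a,b]_A$ is a $\mathcal{G}$-set.

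Next I would record that each $[a,b]_A$ is open (indeed it is one of the basic compact open sets $\mathcal{D}$ from the previous proposition). Then $r$-discreteness follows: the unit space $\partial X$, identified with $\{(x,0,x) : x \in \partial X\}$ as in Remark \ref{remk.501}(b), equals $[0,0]_{V(p_0)}$ glued over the basic open cover of $\partial X$ — more simply, it is covered by sets of the form $[0,0]_A = \{(x,0,x) : x \in A\}$ with $A \in \mathcal{E}$, each of which is a $\mathcal{G}$-set that is open in $\mathcal{G}$. Since $\partial X$ is covered by open $\mathcal{G}$-sets of $\mathcal{G}$, the groupoid $\mathcal{G}$ is $r$-discrete by definition (equivalently, the unit space is open in $\mathcal{G}$ and the basis $\mathcal{D}$ consists of $\mathcal{G}$-sets, which is exactly Renault's characterization).

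The only genuinely substantive point is the injectivity of concatenation $x \mapsto ax$ on $\partial X$ for fixed $a \in X$ — this is where the structure of the $\Lambda$-tree enters. The subtlety is the amalgamation case in the definition of $pq$: if $a$ ends in $G_i$ and $x$ begins in $G_i \cup \partial G_i$, the last letter of $a$ gets absorbed into the first letter of $x$, so one must check that the map is still injective, i.e. that from $ax$ one can recover $x$. This is true because $l(a)$ is known, so one can locate within $ax$ the position where the prefix of $\Lambda$-length $l(a)$ ends, and subtract off the contribution of $a$ componentwise; additivity of $l$ (noted in Section \ref{sect.z22}) guarantees this position is well-defined. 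I expect this bookkeeping to be the main obstacle, though it is routine once one is careful about the amalgamation convention. Everything else is a formal unwinding of definitions.
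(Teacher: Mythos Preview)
Your proposal is correct and follows essentially the same approach as the paper: the paper verifies the equivalent condition $[a,b]_A\cdot([a,b]_A)^{-1}\subseteq\partial X$ and $([a,b]_A)^{-1}\cdot[a,b]_A\subseteq\partial X$, which amounts to exactly the injectivity of $x\mapsto ax$ and $x\mapsto bx$ that you check directly, and then concludes $r$-discreteness from the existence of a base of compact open $\mathcal{G}$-sets. Your explicit discussion of the amalgamation case in the concatenation map is more careful than the paper, which uses this injectivity tacitly.
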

\begin{proof}

\begin{displaymath}
 \begin{array}{lll}
        & [a,b]_A & = \{(ax, l(a)-l(b), bx):x \in A\}\\
        \Rightarrow  & ([a,b]_A)^{-1} & = \{(bx, l(b)-l(a), ax):x \in
        A\}.
        \end{array}
\end{displaymath}

Hence, $((ax, l(a) - l(b), bx)(by, l(b) - l(a), ay)) \in [a,b]_A \times
([a,b]_A)^{-1}~~\bigcap~~ \mathcal{G}^2$ if and only if $x = y$.  And in that case, $(ax, l(a) - l(b), bx)\cdot(bx, l(b) - l(a), ax)
= (ax, 0, ax) \in \partial X$, via the identification stated in
Remark \ref{remk.501} (b).  This gives $[a,b]_A \cdot ([a,b]_A)^{-1} \subseteq \partial X$.
Similarly, $([a,b]_A)^{-1} \cdot [a,b]_A \subseteq \partial X$.
Therefore $\mathcal{G}$ has a base of compact open $\mathcal{G}$-sets,
implying $\mathcal{G}$ is $r$-discrete.
\end{proof}

Define $C^*(\Lambda)$ to be the $C^*$ algebra of the groupoid $\mathcal{G}$.  Thus $C^*(\Lambda) = \overline{span}\{\chi_S:S \in \mathcal{D}\}$.\\

\noindent For $A = V(p) \in \mathcal{E}$,
\begin{align*}
        [a,b]_A = [a,b]_{V(p)} & = \{(ax, l(a)-l(b), bx):x \in V(p)\}\\
         & = \{(ax, l(a)-l(b), bx):x=pt,~~ t \in
\partial X\}\\
         & = \{(apt, l(a)-l(b), bpt):t \in
\partial X\}\\
         & = [ap,bp]_{\partial X}.
\end{align*}
And for $A = V(p;q) = V(p) \setminus V(q) \in \mathcal{E}$,
\begin{align*}
        [a,b]_A & = \{(ax, l(a)-l(b), bx):x \in V(p) \setminus V(q)\}\\
         &  = \{(ax, l(a)-l(b), bx):x \in V(p)\} \setminus
\{(ax, l(a)-l(b), bx):x \in V(q)\}\\
         &  = [ap,bp]_{\partial X} \setminus
[aq,bq]_{\partial X}.\\
\end{align*}

Denoting $[a,b]_{\partial X}$ by $U(a,b)$ we get:
$\mathcal{D} = \{U(a,b):a,b \in X\}~~ \bigcup~~ \{U(a,b) \setminus U(c,d):a,b,c,d \in X, a \preceq c, b \preceq d\}$.
Moreover $\chi_{U(a,b) \setminus U(c,d)} = \chi_{U(a,b)} -
\chi_{U(c,d)}$, whenever $a \preceq c, b \preceq d$.
This give us: $$C^*(\Lambda) = \overline{span}\{\chi_{U(a,b)}:a,b \in X\}.$$\\

\section{Generators and relations} \label{sect.z24}

For $p \in X$, let $s_p = \chi_{U(p,0)}$, where 0 is the empty
word.  Then:
\begin{align*}
        s_p^*(x,k,y) & = \overline{\chi_{U(p,0)}((x,k,y)^{-1})}\\
         &  = \chi_{U(p,0)}(y,-k,x)\\
         &  = \chi_{U(0,p)}(x,k,y).\\
\end{align*}
Hence $s_p^* = \chi_{U(0,p)}$.

And for $p,q \in X$,
\begin{align*}
        s_ps_q(x,k,y) & = \displaystyle{\sum_{y \sim_mz}\chi_{U(p,0)}((x,k,y)(y,m,z))~\chi_{U(q,0)}((y,m,z)^{-1})}\\
         &  = \displaystyle{\sum_{y \sim_mz}\chi_{U(p,0)}(x,k+m,z)~\chi_{U(q,0)}(z,-m,y)}.\\
\end{align*}
Each term in this sum is zero except when $x = pz$, with $k+m = l(p)$, and $z = qy$, with $l(q) = -m$.  Hence, $k = l(p) - m = l(p) + l(q)$, and $x =
pz = pqy$.  Therefore $s_ps_q(x,k,y) = \chi_{U(pq,0)}(x,k,y)$; that is, $s_ps_q = \chi_{U(pq,0)} = s_{pq}$.

Moreover,
\begin{align*}
        s_ps_q^*(x,k,y) & = \displaystyle{\sum_{y \sim_mz}\chi_{U(p,0)}((x,k,y)(y,m,z))~\chi_{U(0,q)}((y,m,z)^{-1})}\\
         &  = \displaystyle{\sum_{y \sim_mz}\chi_{U(p,0)}(x,k+m,z)~\chi_{U(0,q)}(z,-m,y)}\\
         &  = \displaystyle{\sum_{y \sim_mz}\chi_{U(p,0)}(x,k+m,z)~\chi_{U(q,0)}(y,m,z)}.\\
\end{align*}
Each term in this sum is zero except when $x = pz,~~ k+m = l(p),~~
y = qz$, and $l(q) = m$.  That is, $k = l(p) - l(q)$, and $x =
pz,~~ y = qz$.  Therefore $s_ps_q^*(x,k,y) = \chi_{U(p,q)}(x,k,y)$; that is, $s_ps_q^* = \chi_{U(p,q)}$.

Notice also that
\begin{align*}
        s_p^*s_q(x,k,y) & = \displaystyle{\sum_{y \sim_mz}\chi_{U(0,p)}((x,k,y)(y,m,z))~\chi_{U(q,0)}((y,m,z)^{-1})}\\
         &  = \displaystyle{\sum_{y \sim_mz}\chi_{U(0,p)}(x,k+m,z)~\chi_{U(q,0)}(z,-m,y)}.\\
\end{align*}
is non--zero exactly when $z = px,~~ l(p) = -(k+m),~~ z = qy,$ and
$l(q) = -m$, which implies that $px = qy,~~ l(p) = -k-m = -k+l(q)$.  This implies that $s_p^*s_q$ is non--zero only if either $p \preceq q$ or $q \preceq p$.

If $p \preceq q$ then there exists $r \in X$ such that $q = pr$.
But $-k = l(p) -l(q) \Rightarrow k = l(q)-l(p) = l(r)$.
And $qy = pry \Rightarrow x = ry$.  Therefore $s_p^*s_q = s_r$.
And if $q \preceq p$ then there exists $r \in X$ such that $p = qr$.  Then
$(s_p^*s_q)^* = s_q^*s_p = s_r$.  Hence $s_p^*s_q = s_r^*$.
In short,
\begin{displaymath}
s_p^*s_q = \left\{ \begin{array}{ll}
        s_r & \textrm{if $q = pr$}\\
        s_r^* & \textrm{if $p = qr$}\\
        0 & \textrm{otherwise}.
        \end{array} \right.
\end{displaymath}
We have established that
\begin{equation}
\label{eq10:eps} C^*(\Lambda) =
\overline{span}\{s_ps_q^*:p,q \in X\}.
\end{equation}

Let $\mathcal{G}_0 := \{(x,0,y) \in \mathcal{G}:x,y \in \partial
X\}$.  Then $\mathcal{G}_0$, with the relative topology, has the
basic open sets $[a,b]_A$, where $A \in \mathcal{E}, a,b \in X$
and $l(a) = l(b)$.
Clearly $\mathcal{G}_0$ is a subgroupoid of $\mathcal{G}$. And
\begin{align*}
        C^*(\mathcal{G}_0) & = \overline{span}\{\chi_{U(p,q)}:p,q \in X, l(p) = l(q)\}\\
         &  \subseteq \overline{span}\{\chi_{[p,q]_A}:p,q \in X, l(p) = l(q),
         A \subseteq \partial X\text{ is compact open}\}\\
         & \subseteq C^*(\mathcal{G}_0).
\end{align*}
The second inclusion is due to the fact that $[p,q]_A$ is compact
open whenever $A \subseteq \partial X$ is, hence
$\chi_{[a,b]_A} \in C_c(\mathcal{G}_0)\subseteq
C^*(\mathcal{G}_0)$.

We wish to prove that the $C^*$-algebra $C^*(\mathcal{G}_0)$ is an AF algebra. But first notice that for any $\mu \in X$,
$\displaystyle{V(\mu) = V(\mu e_n') \cup V(\mu e_n'')}$,
where $e_n' =e_n = (0, \ldots, 0, 1)\in G_1$ and $e_n'' =e_n \in G_2$.  

Take a basic open set
$\displaystyle{A = V(\mu)\setminus \left(\bigcup_{k=1}^{m_1}V(\nu_k)\right)}$.  It is possible to rewrite $A$ as $\displaystyle V(p)\setminus \left(\bigcup_{k=1}^{m_2}V(r_k)\right)$ with $\mu \neq p$.  Here is a relatively simple example (pointed out to the author by Spielberg):
$\displaystyle{V(\mu)\setminus V(\mu e_n') = V(\mu e_n'')}$,
where $e_n' = e_n \in G_1$ and $e_n'' = e_n \in G_2$.


\begin{lemma} \label{lemm.505}
Suppose $\displaystyle{A = V(\mu) \setminus \left(\bigcup_{k=1}^s
V(\mu \nu_k)\right) \neq \emptyset}$.  Then we can write $A$ as
$\displaystyle{A = V(p) \setminus \left(\bigcup_{k=1}^{m_1} V(p
r_k)\right)}$ where $l(p)$ is the largest possible, that is, if\\
$\displaystyle{A = V(q) \setminus \left(\bigcup_{j=1}^{m_2} V(q
s_j)\right)}$ then $l(q) \leq l(p)$.
\end{lemma}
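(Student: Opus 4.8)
The plan is to produce $p$ by a finite ``branch‑absorption'' recursion and then to establish maximality by a divergence argument. First normalize the data: since $A\neq\emptyset$ no $\nu_k$ equals $0$, and if $\mu\nu_j\preceq\mu\nu_k$ then $V(\mu\nu_k)$ is redundant in the union, so we may assume the $\nu_k$ are nonzero and pairwise $\preceq$-incomparable without changing $A$. The only ingredients used are the intersection rule \eqref{eq11:eps} and the two facts recorded just before the lemma, $V(w)=V(we_n')\cup V(we_n'')$ and $V(w)\setminus V(we_n')=V(we_n'')$ (in particular $V(we_n')\cap V(we_n'')=\emptyset$), valid for every $w\in X$.

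One step of the recursion is as follows. Suppose $A=V(w)\setminus\bigcup_{r\in R}V(wr)$ with $R$ a finite family of nonzero, pairwise incomparable words (initially $w=\mu$ and $R=\{\nu_k\}$). Since $e_n$ is the least element of $\Lambda^+$, every nonzero word $r$ satisfies exactly one of $e_n'\preceq r$ or $e_n''\preceq r$, according to whether $r$ begins with a letter of $G_1$ or of $G_2$; let $R=R'\sqcup R''$ be this partition. By \eqref{eq11:eps}, $V(wr)\cap V(we_n'')=\emptyset$ for $r\in R'$ and $V(wr)\cap V(we_n')=\emptyset$ for $r\in R''$, while each $V(wr)$ with $r\in R'$ is equal to $V(we_n'\,r^\flat)$ for a unique $r^\flat\in X\cup\{0\}$ (and symmetrically for $R''$). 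Hence $A$ is the disjoint union
\begin{align*}
A\cap V(we_n')&=V(we_n')\setminus\bigcup_{r\in R'}V(we_n'\,r^\flat),&
A\cap V(we_n'')&=V(we_n'')\setminus\bigcup_{r\in R''}V(we_n''\,r^\flat).
\end{align*}
Since $A\neq\emptyset$, at least one summand is nonempty. If both are, the recursion halts with $p=w$. If, say, $A\cap V(we_n')=\emptyset$, then necessarily $R'\neq\emptyset$ (otherwise that summand would be all of $V(we_n')$), so $\abs{R''}<\abs{R}$; moreover no $r^\flat$ with $r\in R''$ is $0$, since that would empty the second summand and force $A=\emptyset$. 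Thus $A=V(we_n'')\setminus\bigcup_{r\in R''}V(we_n''\,r^\flat)$ is again a representation of the required form, over a strictly smaller family, with stem of length $l(w)+e_n$. Iterating at most $s$ times, the recursion terminates and yields $A=V(p)\setminus\bigcup_{k=1}^{m_1}V(pr_k)$ with both $A\cap V(pe_n')$ and $A\cap V(pe_n'')$ nonempty.

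Maximality will follow from $A\subseteq V(q)$ alone. Suppose $A=V(q)\setminus\bigcup_j V(qs_j)$. Then $q\preceq x$ for every $x\in A$, so $q$ is a common prefix of the points of $A$. Pick $x'\in A\cap V(pe_n')$ and $x''\in A\cap V(pe_n'')$; both extend $p$, and because $e_n$ is minimal in $\Lambda^+$ no word lies strictly between $p$ and $pe_n'$ or between $p$ and $pe_n''$. If a common prefix $r$ of $x'$ and $x''$ satisfied $r\succeq pe_n'$, then $pe_n'\preceq x''$, contradicting $x''\in V(pe_n'')$ and $V(pe_n')\cap V(pe_n'')=\emptyset$; hence $r\prec pe_n'$ and therefore $r\preceq p$. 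In particular $q\preceq p$, so writing $p=q\sigma$ we get $l(p)=l(q)+l(\sigma)\ge l(q)$. (The same computation shows $p$ is the longest common prefix of $A$, so the maximal stem is unique.)

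The part I expect to require the most care is the recursion bookkeeping: verifying via \eqref{eq11:eps} that $V(wr)\cap V(we_n'')=\emptyset$ for $r\in R'$, that $V(wr)$ is exactly $V(we_n'\,r^\flat)$, and that incomparability of the family is preserved when one passes to $\{r^\flat\}$. The one genuinely delicate point is that $A\cap V(we_n')$ can vanish without $e_n'$ itself belonging to $R$ — for example when $R'=\{2e_n',\,e_n'e_n''\}$ — which is why the recursion must be driven by the vanishing of a summand rather than by the presence of a particular short word in $R$; once that is noticed, the termination bound $\abs{R''}<\abs{R}$ applies uniformly and the argument closes.
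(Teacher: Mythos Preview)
Your argument is correct, and it takes a genuinely different route from the paper's. The paper does not construct $p$ directly. Instead it splits into two cases according to whether every $\nu_k$ has $l_i(\nu_k)\ge 1$ for some $i<n$. In Case~I it shows that $p=\mu$ is already maximal by producing the boundary point $\mu t$ with $t=(0,\dots,0,\infty)\in\partial G_2$, which lies in $A$ but in no $V(q)$ with $\mu\prec q$. In Case~II it does not exhibit $p$; rather it proves that any admissible stem $q$ must have the form $q=\mu r$ with $0\le l(r)\le l(\nu_1)$ (where $\nu_1$ is the $\nu_k$ of largest length among those with all $l_i=0$ for $i<n$), so there are only finitely many candidates and a maximum exists.

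Your branch--absorption recursion is more uniform: it avoids the case split, produces $p$ explicitly, and yields the extra information that $p$ is the longest common prefix of $A$. One small point to make explicit in your maximality paragraph: when you pass from ``$r\not\succeq pe_n'$'' to ``$r\prec pe_n'$'' you are using that $r$ and $pe_n'$ are comparable, which holds because both are prefixes of $x'$ and \eqref{eq11:eps} forces any two prefixes of the same boundary point to be comparable. With that remark added, the divergence argument is complete. The paper's Case~II, on the other hand, gives a quantitative bound $l(p)-l(\mu)\le \max\{l(\nu_k):l_i(\nu_k)=0\text{ for }i<n\}$, which your recursion also recovers (each step adds $e_n$ to the stem and removes at least one $\nu_k$) but does not state.
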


\begin{proof}
We take two cases:\\
Case I. For each $k = 1, \ldots, s$, there exists $i \in \{1, \ldots, n-1\}$ with $l_i(\nu_k) \geq 1$.
Choose $p = \mu,~~ r_k = \nu_k~~ \textrm{for each } k$ (i. e., leave $A$
the way it is).  Suppose now that $\displaystyle{A = V(q) \setminus \left(\bigcup_{j=1}^{m_2}
V(q s_j)\right)}$ with $l(p) \leq l(q)$.  We will prove that $l(p)
= l(q)$.
Assuming the contrary, suppose $l(p) < l(q)$.  Let $x \in A
\Rightarrow x = q y$ for some $y \in
\partial X$. Since $q y \in \displaystyle{V(p) \setminus
\left(\bigcup_{k=1}^s V(p r_k)\right)}$, $p \preceq q y$. But
$l(p) < l(q) \Rightarrow p \preceq q$.
Let $q = p r$, since $p \neq q$, $r \neq 0$.  Let $r =
a_1a_2\ldots a_d$.  Either $a_1 \in G_1\setminus \{0\}$ or $a_1
\in G_2 \setminus \{0\}$. Suppose, for definiteness, $a_1 \in
G_1\setminus \{0\}$. Take $t = (0,\ldots, 0, \infty) \in \partial G_2$.
Since $l(r_k) > l(t)~~ \textrm{for each } k = 1,\ldots, s$, we get $p r_k
\npreceq p t~~ \textrm{for each } k = 1,\ldots, s$, moreover $p t \in
V(p)$. Hence $pt \in A$. But $pr \npreceq pt \Rightarrow q \npreceq pt \Rightarrow pt \notin V(q) \Rightarrow pt \notin
\displaystyle{V(q) \setminus \left(\bigcup_{j=1}^{m_2} V(q
s_j)\right)}$ which is a contradiction to $\displaystyle{A = V(q)
\setminus \left(\bigcup_{j=1}^{m_2} V(q s_j)\right)}$.  Therefore
$l(p) = l(q)$. In fact, $p = q$.\\

\noindent Case II.  There exists $k \in \{1, \ldots, s\}$ with $l_i(\nu_k) = 0$, for each $i = 1, \ldots, n-1$.  After
rearranging, suppose that $l_i(\nu_k) = 0$ for each $k = 1, \ldots,
\alpha$ and each $i = 1, \ldots, n-1$; and that for each $k = \alpha + 1, \ldots, s$, $l_i(\nu_k) \geq 1$ for some $i \leq n-1$. We can also assume that $l(\nu_1)$ is the largest of $l(\nu_k)$'s for $k \leq \alpha$.  Then
\begin{align*}
        A & = V(\mu) \setminus \left(\bigcup_{k=1}^s
V(\mu \nu_k)\right)\\
         &  = \left[V(\mu) \setminus \left(\bigcup_{k=1}^\alpha
V(\mu \nu_k)\right)\right] \bigcap \left[V(\mu) \setminus \left(\bigcup_{k=\alpha + 1}^s
V(\mu \nu_k)\right)\right].
\end{align*}
Let $me_n = l(\nu_1)$ which is non zero.  We will prove that if we can rewrite $A$ as $\displaystyle{V(q) \setminus \left(\bigcup_{k=1}^{m_2} V(q s_k)\right)}$ with $l(\mu) \leq l(q)$ then $q = \mu r$ with $0 \leq l(r) \leq m(e_n)$.

Clearly if $\mu \npreceq q$, then $A \cap V(q) = \emptyset$.
So, if $A \cap \displaystyle{V(q) \setminus \left(\bigcup_{k=1}^{m_2} V(q s_k)\right)} \neq \emptyset$ then $\mu \preceq q$.  Now let $q = \mu r$, and let $\nu_1 = a_1a_2\ldots a_d$. Observe that since for each $j$, $a_j \in \Lambda^+$ and that $\l_k(\nu_1) = 0$ for each $k \leq n-1$, we have $l_k(a_j) = 0$ for all $k \leq n-1$.  Also, by assumption, $l(\nu_1) > 0$, therefore either $a_d \in G_1\setminus \{0\}$ or
$a_d \in G_2 \setminus \{0\}$. Suppose, for definiteness, that $a_d \in
G_1\setminus \{0\}$.  Let $a_d' = a_d - e_n$ and let $\nu' =
a_1a_2\ldots a_d'$ (or just $a_1a_2\ldots a_{d-1}$, if $a_d' =
0$). If $V(\mu \nu') \cap A = \emptyset$ then we can replace $\nu_1$ by $\nu'$ in the expression of $A$ and and (after rearranging the $\nu_i's$) choose a new $\nu_1$.  Since $A \neq \emptyset$ this process of replacement must stop with $V(\mu\nu') \cap A \neq \emptyset$. Letting $e_n' = e_n \in G_1$ and $e_n'' = e_n \in G_2$, then $V(\mu\nu') = V(\mu \nu' e_n') \cup V(\mu \nu'e_n'') = V(\mu \nu_1) \cup V(\mu \nu'e_n'')$. Since $V(\mu\nu_1) \cap A = \emptyset$, $A \cap V(\mu \nu'e_n'') \neq \emptyset$ hence $\displaystyle \nu' e_n'' \notin \{\nu_1, \ldots, \nu_\alpha\}.$  Take $t' = (0,\ldots, 0,\infty) \in \partial G_1$ and $t'' = (0,\ldots, 0,\infty) \in \partial G_2$.  Then $\displaystyle \mu \nu'e_n''t',~~\mu \nu'e_n''t'' \in V(\mu) \setminus \left(\bigcup_{k=1}^\alpha
V(\mu \nu_k)\right)$.  Moreover, for each $\displaystyle k = \alpha + 1, \ldots, s$, we have $\displaystyle l(\nu'e_n''t'),~~ l(\nu'e_n''t'') < l(\nu_k)$, implying $\displaystyle \mu \nu'e_n''t',~~\mu \nu'e_n''t'' \in V(\mu) \setminus \left(\bigcup_{k=\alpha+1}^s
V(\mu \nu_k)\right)$.  Hence $\mu \nu'e_n''t',~~\mu \nu'e_n''t'' \in \displaystyle{V(q) \setminus \left(\bigcup_{k=1}^{m_2} V(q s_k)\right)}$.  Therefore $q \preceq \mu \nu'e_n'' \Rightarrow \mu r \preceq \mu \nu'e_n'' \Rightarrow 0 \leq l(r) \leq l(\nu'e_n'') = l(\nu') + e_n = me_n$.  Therefore there is only a finite possible $r$'s we can choose form. [In fact, since $r \preceq \nu'e_n''$, there are at most $m$ of them to choose from.]
\end{proof}

To prove that $C^*(\mathcal{G}_0)$ is an AF algebra, we start with a finite subset $\mathcal{U}$ of the generating set $\{\chi_{U(p,q)}:p,q \in X, l(p) = l(q)\}$ and show that there is a finite dimensional
$C^*$-subalgebra of $C^*(\mathcal{G}_0)$ that contains the set $\mathcal{U}$.

\begin{theorem} \label{thm.501} $C^*(\mathcal{G}_0)$ is an AF
algebra.
\end{theorem}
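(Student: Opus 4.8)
The plan is to realize $C^*(\mathcal{G}_0)$ as an increasing union of finite-dimensional $C^*$-subalgebras, using the description $C^*(\mathcal{G}_0) = \overline{\operatorname{span}}\{\chi_{U(p,q)} : p,q \in X,\ l(p) = l(q)\}$ established in the previous section. It suffices to show that every finite subset $\mathcal{U}$ of the generating set $\{\chi_{U(p,q)} : l(p)=l(q)\}$ is contained in a finite-dimensional $C^*$-subalgebra of $C^*(\mathcal{G}_0)$. So, fix finitely many pairs $(p_1,q_1),\dots,(p_N,q_N)$ with $l(p_j) = l(q_j)$.

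First I would observe that the product rule $s_p^* s_q \in \{s_r, s_r^*, 0\}$ and the relation $s_p s_q = s_{pq}$ force products and adjoints of the $\chi_{U(p_j,q_j)}$ to be expressed via $U(\cdot,\cdot)$ sets whose ``legs'' are concatenations and cancellations of the $p_j$'s and $q_j$'s; since all $l(p_j) = l(q_j)$, every such product again has the form $\chi_{U(p,q)}$ with $l(p) = l(q)$, so the $*$-algebra they generate stays inside the span of such generators. The key structural step, however, is to find a common refinement: I want a finite family of mutually disjoint ``matrix-unit'' sets $[a_i, b_i]_{A}$ (equivalently, to cut the cylinder sets $V(\cdot)$ appearing in the legs down to a common partition of the relevant part of $\partial X$) so that each $\chi_{U(p_j,q_j)}$ becomes a finite sum of these matrix units. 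Concretely, for a fixed ``level'' one decomposes $V(\mu) = \bigsqcup_{\nu} V(\mu\nu)$ over suitable $\nu$ (using $V(\mu) = V(\mu e_n') \cup V(\mu e_n'')$ and its iterates, as noted before Lemma~\ref{lemm.505}), and the $U(p,q) = [p,q]_{\partial X}$ refine accordingly via Lemma~\ref{lemm.501}: $[p,q]_{V(w)}[q,p']_{V(w)} = [p,p']_{V(w)}$, $[p,q]_{V(w)}^* = [q,p]_{V(w)}$, and sets attached to incomparable words are orthogonal.

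The main obstacle — and the reason Lemma~\ref{lemm.505} was proved — is that the naive refinement need not terminate: a set of the form $V(\mu) \setminus \bigcup_k V(\mu\nu_k)$ cannot in general be subdivided into cylinder sets $V(\mu\rho)$ indexed by arbitrarily long $\rho$ (think of $V(\mu)\setminus V(\mu e_n') = V(\mu e_n'')$, where refining at $\mu$ collapses). Lemma~\ref{lemm.505} says precisely that among all representations $A = V(q)\setminus\bigcup_j V(q s_j)$ there is one with $l(q)$ maximal, and that competing representations have $q = \mu r$ with $l(r)$ bounded — so only finitely many words $q$ are ever relevant. I would use this to argue that, for the finite data $\{(p_j,q_j)\}$, there is a finite set $W \subseteq X$ of words such that every $U(p_j,q_j)$ is a finite disjoint union of sets $[a,b]_{A}$ with $a,b$ built from $W$ and $A$ ranging over a fixed finite partition of $\partial X$ into sets of the form $V(w)$ or $V(w)\setminus\bigcup V(w')$ with all words drawn from $W$. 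Grouping these matrix units by their ``shape'' (i.e., by the block they act within) exhibits the generated subalgebra as a finite direct sum of matrix algebras, hence finite-dimensional.

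Finally I would assemble the pieces: given $\varepsilon > 0$ and $x \in C^*(\mathcal{G}_0)$, approximate $x$ within $\varepsilon$ by a finite linear combination of generators $\chi_{U(p_j,q_j)}$, apply the above to get a finite-dimensional subalgebra $F_{\mathcal{U}}$ containing all of them (hence containing the approximant), and conclude that $C^*(\mathcal{G}_0) = \overline{\bigcup F_{\mathcal{U}}}$ is AF. The only genuinely delicate point is the bookkeeping in the refinement step, where Lemma~\ref{lemm.505} does the real work of guaranteeing finiteness; everything else is the standard groupoid matrix-unit computation together with Lemma~\ref{lemm.501}.
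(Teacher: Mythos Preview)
Your overall strategy matches the paper's: show every finite set of generators $\chi_{U(p_j,q_j)}$ lies in the span of a finite self-adjoint system of matrix units. But you have mislocated the role of Lemma~\ref{lemm.505}. Finiteness of the refinement is \emph{not} the issue, and no iteration of $V(\mu)=V(\mu e_n')\cup V(\mu e_n'')$ is used. The paper simply takes the atoms $A_{\mathbf F}=\bigcap_{A\in\mathbf F}A\setminus\bigcup_{A\notin\mathbf F}A$ of the Boolean algebra generated by $\mathcal S=\{V(p_1),V(q_1),\dots,V(p_s),V(q_s)\}$; this collection $\mathcal C$ is trivially finite, and each nonempty atom already has the form $V(p)\setminus\bigcup_i V(pr_i)$ by~\eqref{eq11:eps}. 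From $\mathcal C$ one builds the finite family $\mathcal B=\{[p,q]_{C\cap D}:pC,qD\in\mathcal C\}$ and checks it is pairwise disjoint; each $\chi_{U(p_\alpha,q_\alpha)}$ is then a finite sum over $\mathcal B$.

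Where Lemma~\ref{lemm.505} actually does its work is in the matrix-unit \emph{verification}, which you pass off as ``the standard groupoid matrix-unit computation.'' Each $[p,q]_{C\cap D}\in\mathcal B$ is rewritten as $[p\mu,q\mu]_W$ with $l(p\mu)=l(q\mu)$ \emph{maximal}. To check that $\chi_{[p,q]_W}\cdot\chi_{[r,s]_V}$ is either $0$ or $\chi_{[p,s]_W}$, one needs that $qW=rV$ (equality as subsets of $\partial X$) forces $q=r$ and $W=V$. A priori one could have $qW=rV$ with $l(q)<l(r)$, say $r=qc$ and $W=cV$; then the product would not land back in the chosen system. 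Maximality from Lemma~\ref{lemm.505} rules this out: $[r,s]_V=[rc,sc]_W$ would contradict maximality of $l(r)$. So the lemma is not about bounding how many words appear---it is about making the label on each piece canonical, so that equality of source sets forces equality of labels and the collection $\mathcal D=\{\chi_{[p,q]_W}\}$ genuinely is a system of matrix units.
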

\begin{proof}
Suppose that $\mathcal{U} = \{\chi_{U(p_1,q_1)},\chi_{U(p_2,q_2)}, \ldots \chi_{U(p_s,q_s)}\}$
is a (finite) subset of the generating set of $C^*(\mathcal{G}_0)$. Let $$\mathcal{S} := \{V(p_1), V(q_1), V(p_2), V(q_2), \ldots,
V(p_s), V(q_s)\}.$$   We ``disjointize'' the set $\mathcal{S}$ as follows.  For a subset $\mathbf{F}$ of $\mathcal{S}$, write  $$\displaystyle{A_\mathbf{F}:=\bigcap_{A\in
\mathbf{F}}A \setminus \bigcup_{A \notin \mathbf{F}}A}.$$  Define $$\mathcal{C}:=\displaystyle{\{A_\mathbf{F} : \mathbf{F}
\subseteq \mathcal{S}\}}.$$ Clearly, the set $\mathcal{C}$ is a finite collection of pairwise
disjoint sets. A routine computation reveals that for any $E \in
\mathcal{S}$, $\displaystyle{E = \bigcup\{C\in \mathcal{C}: C \subseteq E\}}$.
It follows from  \eqref{eq11:eps} that for any $\mathbf{F}
\subseteq \mathcal{S}$, $\displaystyle{\bigcap_{A\in \mathbf{F}}A
= V(p)}$, for some $p \in X$, if it is not empty.
Hence,
\begin{align*}
        A_\mathbf{F}  =  \displaystyle V(p) \setminus \bigcup_{i =
1}^k V(pr_i)\\
\end{align*}
for some $p \in X$ and some $r_i \in X$.  Let $p_\mathbf{F} \in X$ be such that $A_\mathbf{F} =  \displaystyle V(p) \setminus \bigcup_{i =
1}^k V(pr_i)$  and $l(p_\mathbf{F})$ is maximum (as in Lemma \ref{lemm.505}). Then
\begin{align*}
        A_\mathbf{F} &  = p_\mathbf{F}\left(\partial X \setminus
\left(\bigcup_{i=1}^k{V(r_i)}\right)\right)\\
         & = p_\mathbf{F}C_\mathbf{F},
\end{align*}
where $\displaystyle C_\mathbf{F} = \partial X \setminus
\left(\bigcup_{i=1}^k{V(r_i)}\right)$.  Now $\displaystyle V(p_\alpha) = p_{F_1}C_{F_1} \cup p_{F_2}C_{F_2} \cup
\ldots \cup p_{F_k}C_{F_k}$  where $\{F_1, F_2, \ldots, F_k\} = \{F \subseteq \mathcal{S}:V(p_\alpha) \in F\}$.  Notice that $p_{F_i}C_{F_i} \subseteq V(p_\alpha)$ for each $i$, hence $p_\alpha \preceq p_{F_i}$.  Hence $p_{F_i}C_{F_i} = p_\alpha t_iC_{F_i}$, for some $t_i \in X$.  Therefore $V(p_\alpha) = p_\alpha U_1 \cup p_\alpha U_2 \cup \ldots \cup p_\alpha U_k$ where $U_i = t_iC_{F_i}$.  Similarly $V(q_\alpha) =  q_\alpha V_1 \cup q_\alpha V_2 \cup \ldots \cup q_\alpha V_m$, where each $q_\alpha V_i \in \mathcal{C}$ is subset of $V(q_\alpha)$.
Consider the set $$\displaystyle \mathcal{B} := \{[p,q]_{C \cap D}:pC, qD \in \mathcal{C} \text{ and }p = p_\alpha, q = q_\alpha, 1 \leq \alpha \leq s\}.$$  Since $\mathcal{C}$ is a finite collection, this collection
is finite too.  We will prove that $\mathcal{B}$ is pairwise disjoint.

Suppose $[p,q]_{C \cap D}~~ \bigcap~~ [p',q']_{C' \cap D'}$ is
non-empty. Clearly $p(C \cap D)~~ \bigcap~~ p'(C' \cap D') \neq
\emptyset$, and $q(C \cap D)~~ \bigcap~~ q'(C' \cap D') \neq
\emptyset$. Therefore, among other things,  $pC \cap p'C' \neq \emptyset$ and $qD \cap
q'D' \neq \emptyset$, but by construction, $\{pC, qD, p'C',
q'D'\}$ is pairwise disjoint.  Hence $pC = p'C'$ and $qD = q'D'$.
Suppose, without loss of generality, that $l(p) \leq l(p')$.  Then
$p' = p r$ and $q' = q s$ for some $r,s \in X$, hence $[p',
q']_{C' \cap D'} = [pr, qs]_{C' \cap D'}$. Let $(p x, 0 , q x) \in
[p,q]_{C\cap D} \bigcap [pr, qs]_{C' \cap D'}$. Then $p x = p r t$
and $q x = q s t$, for some $t \in C' \cap D'$, hence $x = r t = s
t$. Therefore $r = s$ (since $l(r) = l(p') - l(p) = l(q') - l(q) =
l(s)$).  Hence $pC = p'C' = prC'$, and $qD = q'D' = qrD'$, implying $C
= rC'$ and $D = rD'$.  This gives us $C \cap D = rC' \cap rD' = r(C'\cap
D')$.  Hence $[p',q']_{C' \cap D'} = [pr,qr]_{C' \cap D'} =
[p,q]_{r(C' \cap D')} = [p,q]_{C \cap D}$. Therefore $\mathcal{B}$
is a pairwise disjoint collection.

For each $[p,q]_{C \cap D} \in \mathcal{B}$, since $C \cap D$
is of the form $\displaystyle{V(\mu) \setminus \bigcup_{i = 1}^k
V(\mu \nu_i)}$, we can rewrite $C \cap D$ as $\mu W$, where $W = \displaystyle{\partial X \setminus \bigcup_{i = 1}^k
V(\nu_i)}$ and $l(\mu)$ is maximal (by Lemma
\ref{lemm.505}).  Then $[p,q]_{C \cap D} = [p,q]_{\mu W} =
[p\mu,q\mu]_W$.  Hence each $[p,q]_{C \cap D} \in \mathcal{B}$ can be
written as $[p,q]_W$ where $l(p) = l(q)$ is maximal and $W = \displaystyle{\partial X \setminus \bigcup_{i = 1}^k
V(\nu_i)}$.

Consider the collection $\displaystyle{\mathcal{D} := \{ \chi_{[p,q]_W}:[p,q]_W \in \mathcal{B}}\}$. We
will show that, for each $1 \leq \alpha \leq s$,
$\chi_{U(p_\alpha,q_\alpha)}$ is a sum of elements of
$\mathcal{D}$ and that $\mathcal{D}$ is a self-adjoint system of
matrix units.
For the first, let $V(p_\alpha) = p_\alpha U_1 \cup p_\alpha U_2 \cup
\ldots \cup p_\alpha U_k$ and $V(q_\alpha) =
 q_\alpha V_1 \cup q_\alpha V_2 \cup \ldots
\cup q_\alpha V_m$.
One more routine computation gives us:
\begin{align*}
        U(p_\alpha,q_\alpha) = [p_\alpha,q_\alpha]_{\partial X} & = \displaystyle{\bigcup_{i, j=1}^{k,m}\big([p_\alpha,p_\alpha]_{U_i}
        \cdot [p_\alpha,q_\alpha]_{\partial X} \cdot [q_\alpha,q_\alpha]_{V_j}\big)}\\
         & = \displaystyle{\bigcup_{i, j=1}^{k,m}[p_\alpha,q_\alpha]_{U_i \cap
         V_j}}.
\end{align*}
Since the union is disjoint, $\displaystyle{\chi_{U(p_\alpha,q_\alpha)} =
\sum_{i,j=1}^{k,m}\chi_{[p_\alpha,q_\alpha]_{U_i \cap V_j}}}$.
And each $\chi_{[p_\alpha,q_\alpha]_{U_i \cap V_j}}$ is in the
collection $\mathcal{D}$.  Therefore
$\mathcal{U} \subseteq span(\mathcal{D})$.

To show that $\mathcal{D}$ is a self-adjoint system of matrix
units, let $\chi_{[p,q]_W}, \chi_{[r,s]_V} \in \mathcal{D}$.
Then
\begin{align*}
        \chi_{[p,q]_W} \cdot \chi_{[r,s]_V}(x_1,0,x_2) & = \displaystyle{\sum_{y_1, y_2}
        \chi_{[p,q]_W}\big((x_1, 0, x_2)(y_1, 0, y_2)\big) \cdot \chi_{[r,s]_V}(y_2, 0, y_1)}\\ \\
         & = \displaystyle{\sum_{y_2}
        \chi_{[p,q]_W}(x_1, 0, y_2) \cdot \chi_{[r,s]_V}(y_2, 0,
        x_2)},
\end{align*}
where the last sum is taken over all $y_2$ such that $x_1 \thicksim_0
y_2 \thicksim_0 x_2$.
Clearly the above sum is zero if $x_1 \notin pW$ or $x_2 \notin
sV$.  Also, recalling that $qW$ and $rV$ are either equal or
disjoint, we see that the above sum is zero if they are disjoint.
For the preselected $x_1$, if $x_1 = pz$ then $y_2 = qz$ (is
uniquely chosen). Therefore the above sum is just the single term
$\chi_{[p,q]_W}(x_1, 0, y_2) \cdot \chi_{[r,s]_V}(y_2, 0, x_2)$.
Suppose that $qW = rV$.  We will show that $l(q) = l(r)$, which
implies that $q = r$ and $W = V$.

Given this,
\begin{align*}
        \chi_{[p,q]_W} \cdot \chi_{[r,s]_V}(x_1,0,x_2) & = \chi_{[p,q]_W}(x_1, 0, y_2)
         \cdot \chi_{[r,s]_V}(y_2, 0, x_2)\\
         & = \chi_{[p,q]_W}(x_1, 0, y_2) \cdot \chi_{[q,s]_W}(y_2, 0, x_2)\\
         & = \chi_{[p,s]_W}(x_1, 0, x_2).
\end{align*}

To show that $l(q) = l(r)$, assuming the contrary, suppose $l(q) <
l(r)$ then $r = qc$ for some non-zero $c \in X$, implying $V =
cW$. Hence $[r,s]_V = [r,s]_{cW} = [rc,sc]_W$, which contradicts
the maximality of $l(r) = l(s)$.  By symmetry $l(r) < l(q)$ is
also impossible.  Hence $l(q) = l(r)$ and $W = V$. This concludes
the proof.
\end{proof}

\section{Crossed product by the gauge action}

Let $\hat{\Lambda}$ denote the dual of $\Lambda$, i.e., the
abelian group of continuous homomorphisms of $\Lambda$ into the
circle group $\mathds{T}$ with pointwise multiplication: for $t,s
\in \hat{\Lambda}$,
$\langle\lambda, ts\rangle = \langle\lambda,
t\rangle\langle\lambda,s\rangle~~ \textrm{for each } \lambda \in
\Lambda$, where $\langle\lambda, t\rangle$ denotes the value of $t
\in \hat{\Lambda}$ at $\lambda \in \Lambda$.

Define an action called the \textbf{gauge action}: $\alpha : \hat{\Lambda} \longrightarrow Aut(C^*(\mathcal{G}))$ as follows. For $t \in \hat{\Lambda}$, first define $\alpha_t:C_c(\mathcal{G})
\longrightarrow C_c(\mathcal{G})$ by $\alpha_t(f)(x,\lambda, y) =
\langle \lambda,t\rangle f(x, \lambda, y)$ then extend
$\alpha_t:C^*(\mathcal{G}) \longrightarrow C^*(\mathcal{G})$ continuously.
Notice that $(A, \hat{\Lambda}, \alpha)$ is a $C^*$- dynamical system.

Consider the linear map $\Phi$ of $C^*(\mathcal{G})$ onto the fixed-point algebra $C^*(\mathcal{G})^\alpha$ given by $$\Phi(a) = \int_{\hat{\Lambda}}{\alpha_t(a) \,dt}\text{, for }a \in C^*(\mathcal{G}).$$  where $dt$ denotes a normalized Haar measure on $\widehat{\Lambda}$.

\begin{lemma} Let $\Phi$ be defined as above.
    \begin{enumerate}
        \item The map $\Phi$ is a faithful conditional expectation; in the sense that $\Phi(a^*a) = 0$ implies $a = 0$.
        \item $C^*(\mathcal{G}_0) = C^*(\mathcal{G})^\alpha$.
    \end{enumerate}
\end{lemma}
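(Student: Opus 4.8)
The plan is to run the two parts through the usual machinery for the gauge action, reducing everything to properties of the averaging map $\Phi(a)=\int_{\hat{\Lambda}}\alpha_t(a)\,dt$, just as one does for graph $C^*$-algebras.

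\emph{Step 1: $\Phi$ is a conditional expectation onto $C^*(\mathcal{G})^\alpha$.} I would first note $\Phi$ is well defined ($t\mapsto\alpha_t(a)$ is norm-continuous since $\alpha$ is a strongly continuous action of the compact group $\hat{\Lambda}$), linear and contractive. Invariance of the normalized Haar measure gives $\alpha_s\circ\Phi=\Phi$, so $\Phi$ maps into $C^*(\mathcal{G})^\alpha$, and $\Phi(a)=a$ for $\alpha$-fixed $a$; thus $\Phi$ is a contractive projection onto the $C^*$-subalgebra $C^*(\mathcal{G})^\alpha$, hence a conditional expectation by Tomiyama's theorem. The explicit description I will use repeatedly: since $\int_{\hat{\Lambda}}\langle\lambda,t\rangle\,dt$ is $1$ when $\lambda=0$ and $0$ otherwise, on $C_c(\mathcal{G})$ one has $\Phi(f)(x,\lambda,y)=f(x,\lambda,y)$ if $\lambda=0$ and $0$ if $\lambda\neq 0$; that is, $\Phi(f)$ is the restriction of $f$ to the clopen subgroupoid $\mathcal{G}_0=\{(x,0,y)\in\mathcal{G}\}$, extended by zero (clopen because $\{0\}$ is clopen in the discrete group $\Lambda$).

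\emph{Step 2: Part (2).} For the inclusion $C^*(\mathcal{G})^\alpha\subseteq C^*(\mathcal{G}_0)$: by Step 1 the map $\Phi$ sends $C_c(\mathcal{G})$ into $C_c(\mathcal{G}_0)$, hence by continuity $\Phi(C^*(\mathcal{G}))\subseteq C^*(\mathcal{G}_0)$; since $\Phi$ is the identity on $C^*(\mathcal{G})^\alpha$, every $\alpha$-fixed element lies in $C^*(\mathcal{G}_0)$. For the reverse inclusion, recall from the computations preceding Theorem~\ref{thm.501} that $C^*(\mathcal{G}_0)=\overline{\mathrm{span}}\{s_ps_q^*:p,q\in X,\ l(p)=l(q)\}$; since $s_ps_q^*=\chi_{U(p,q)}$ is supported on $\{(\cdot,l(p)-l(q),\cdot)\}$, we get $\alpha_t(s_ps_q^*)=\langle l(p)-l(q),t\rangle\,s_ps_q^*$, which equals $s_ps_q^*$ whenever $l(p)=l(q)$. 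Thus each generator of $C^*(\mathcal{G}_0)$ is $\alpha$-fixed, so $C^*(\mathcal{G}_0)\subseteq C^*(\mathcal{G})^\alpha$.

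\emph{Step 3: faithfulness of $\Phi$.} Let $E_0:C^*(\mathcal{G})\to C_0(\partial X)$ be the canonical conditional expectation of the étale groupoid $\mathcal{G}$, given on $C_c(\mathcal{G})$ by restriction to the unit space $\partial X=\{(x,0,x)\}$. On $C_c(\mathcal{G})$ we have $E_0(\Phi(f))(x)=\Phi(f)(x,0,x)=f(x,0,x)=E_0(f)(x)$, so by continuity $E_0=E_0\circ\Phi$ on all of $C^*(\mathcal{G})$. Hence $\Phi(a^*a)=0$ forces $E_0(a^*a)=0$, and since $E_0$ is faithful, $a^*a=0$, i.e. $a=0$. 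The one genuinely nontrivial input — and the main obstacle — is the faithfulness of $E_0$ on $C^*(\mathcal{G})$. Faithfulness of the unit-space expectation is automatic on the reduced algebra $C_r^*(\mathcal{G})$ (via the regular representations $\lambda_u$ at units $u\in\partial X$, for which $\langle\lambda_u(f)\delta_u,\delta_u\rangle=f(u)$), so what is really needed is that $\mathcal{G}$ is amenable, i.e. $C^*(\mathcal{G})=C_r^*(\mathcal{G})$; this holds because $\mathcal{G}$ is built from a bundle of trees in the manner of \cite{Sp1}, and is consistent with the nuclearity established in Section~6. (If instead one reads ``$C^*(\mathcal{G})$'' as the reduced algebra throughout, the argument via the $\lambda_u$ gives faithfulness of $E_0$ directly and no amenability input is needed.)
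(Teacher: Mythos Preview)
Your Steps~1 and~2 are correct and match the paper's argument for part~(2): both directions are checked on the spanning set $\{s_ps_q^*\}$, using $\alpha_t(s_ps_q^*)=\langle l(p)-l(q),t\rangle s_ps_q^*$ and the orthogonality relation $\int_{\hat\Lambda}\langle\lambda,t\rangle\,dt=\delta_{\lambda,0}$.

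For faithfulness, however, your Step~3 takes a detour that is both unnecessary and, within the paper's logical order, circular. You reduce faithfulness of $\Phi$ to faithfulness of the unit-space expectation $E_0$ via $E_0=E_0\circ\Phi$, and then appeal to $C^*(\mathcal{G})=C_r^*(\mathcal{G})$. But look at Section~6: the paper proves $C^*(\mathcal{G})=C_r^*(\mathcal{G})$ precisely by using that the expectation $E=\Phi$ onto $C^*(\mathcal{G}_0)$ is faithful (from $E^r\circ\lambda=E$ with $E$ faithful one concludes $\lambda$ is injective). So you cannot import amenability from Section~6 to prove this lemma; and the vague appeal to \cite{Sp1} does not cover this groupoid, which is not a graph groupoid in Spielberg's sense.

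The paper's route, implicit in the phrase ``since the action $\alpha$ is continuous \ldots\ the expectation is faithful,'' is the elementary general fact that Haar-averaging over a strongly continuous compact group action is \emph{always} faithful: if $a\ge 0$ and $\Phi(a)=0$, then for every state $\phi$ one has $\int_{\hat\Lambda}\phi(\alpha_t(a))\,dt=\phi(\Phi(a))=0$; the integrand is continuous and nonnegative, hence identically zero, so $\phi(a)=0$ for all states $\phi$, giving $a=0$. This one-line argument needs no groupoid amenability and keeps the paper's logic noncircular; you should replace Step~3 with it.
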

\begin{proof}
  Since the action $\alpha$ is continuous, we see that $\Phi$ is a conditional expectation from $C^*(\mathcal{G})$ onto $C^*(\mathcal{G})^\alpha$, and that the expectation is faithful.  For $p,q \in X$, $\alpha_t(s_ps_q^*)(x,l(p)-l(q),y) = \langle  l(p)-l(q),t\rangle s_ps_q^*(x, l(p)-l(q),y)$.  Hence if $l(p) = l(q)$ then  $\alpha_t(s_ps_q^*) = s_ps_q^* \textrm{ for each } t \in  \hat{\Lambda}$.  Therefore $\alpha$ fixes $C^*(\mathcal{G}_0)$.  Hence $C^*(\mathcal{G}_0) \subseteq C^*(\mathcal{G})^\alpha$.  By continuity of $\Phi$ it suffices to show that $\Phi(s_ps_q^*) \in C^*(\mathcal{G}_0)$ for all $p, q \in X$.
  \begin{align*}
        \int_{\hat{\Lambda}}{\alpha_t(s_ps_q^*) \,dt} &  = \int_{\hat{\Lambda}}{\langle
            l(p)-l(q), t\rangle s_ps_q^* \,dt} = 0, \textrm{ when }l(p) \neq l(q).
  \end{align*}
It follows from (\ref{eq10:eps}) that $C^*(\mathcal{G})^\alpha \subseteq C^*(\mathcal{G}_0)$.  Therefore $C^*(\mathcal{G})^\alpha = C^*(\mathcal{G}_0)$.
\end{proof}

 We study the crossed product $C^*(\mathcal{G}) \times_\alpha \widehat{\Lambda}$.  Recall that $C_c(\hat{\Lambda}, A)$, which is equal to
$C(\hat{\Lambda}, A)$, since $\hat{\Lambda}$ is compact, is a dense
*-subalgebra of $A \times_\alpha \hat{\Lambda}$. Recall also that
multiplication (convolution) and involution on $C(\hat{\Lambda},
A)$ are, respectively, defined by:
$$\displaystyle{(f\cdot g)(s) = \int_{\hat{\Lambda}}
{f(t)\alpha_t(g(t^{-1}s))\,dt}}$$
and
$$f^*(s)=\alpha(f(s^{-1})^*).$$

The functions of the form $f(t) =
\langle\lambda,t\rangle s_ps_q^*$ from $\hat{\Lambda}$ into $A$
form a generating set for $A \times_\alpha \hat{\Lambda}$. Moreover
the fixed-point algebra $C^*(\mathcal{G}_0)$ can be
imbedded into $A \times_\alpha \hat{\Lambda}$ as
follows:  for each $b \in C^*(\mathcal{G}_0)$, define the function $b:\hat{\Lambda}
\longrightarrow A$ as $b(t) = b$ (the constant function). Thus $C^*(\mathcal{G}_0)$ is a subalgebra of $A \times_\alpha \hat{\Lambda}$.

\begin{proposition}\label{prop.510}
The $C^*$-algebra $B := C^*(\mathcal{G}_0)$ is a hereditary $C^*$-subalgebra of $A \times_\alpha
\hat{\Lambda}$.
\end{proposition}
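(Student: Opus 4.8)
The plan is to realize $B=C^*(\mathcal{G}_0)$ as a corner of $C:=A\times_\alpha\widehat{\Lambda}$, from which heredity is automatic. The first step is to note that $A=C^*(\mathcal{G})$ is unital: since $0\in X$ and $V(0)=\partial X$, the compactness established in Section~\ref{sect.z22} shows $\partial X$ is compact, so $s_0=\chi_{U(0,0)}=\chi_{\partial X}$ is the characteristic function of the entire unit space and is therefore the identity of $C^*(\mathcal{G})$. Hence the constant function $P\colon t\mapsto s_0$ lies in $C(\widehat{\Lambda},A)\subseteq C$, and using $\alpha_t(s_0)=s_0$ together with the convolution and involution formulas one checks at once that $P\cdot P=P$ and $P^*=P$; so $P$ is a projection in $A\times_\alpha\widehat{\Lambda}$.

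The second step is to prove $PCP=B$. Since the functions $f^{p,q}_\lambda\colon t\mapsto\langle\lambda,t\rangle\,s_ps_q^*$ span a dense subspace of $A\times_\alpha\widehat{\Lambda}$ (as recorded just above the proposition), the map $c\mapsto P\cdot c\cdot P$ on $C$ is continuous, and the corner $PCP$ is closed, it is enough to compute $P\cdot f^{p,q}_\lambda\cdot P$. Using the identity $\alpha_t(s_ps_q^*)=\langle l(p)-l(q),t\rangle\,s_ps_q^*$ from Section~\ref{sect.z24} and again $\alpha_u(s_0)=s_0$, a direct application of the convolution formula gives
\[
(P\cdot f^{p,q}_\lambda\cdot P)(s)=\Big(\int_{\widehat{\Lambda}}\langle\lambda,u\rangle\,du\Big)\Big(\int_{\widehat{\Lambda}}\langle l(p)-l(q)-\lambda,t\rangle\,dt\Big)\,s_ps_q^*,
\]
a constant function. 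By orthogonality of the characters of the compact group $\widehat{\Lambda}$, both integrals vanish unless $\lambda=0$ and $l(p)=l(q)$, in which case $P\cdot f^{p,q}_0\cdot P$ is the constant function with value $s_ps_q^*$. Therefore $PCP$ is the closed span of the constant functions with values $s_ps_q^*$ subject to $l(p)=l(q)$, which is exactly the copy of $C^*(\mathcal{G}_0)$ inside $A\times_\alpha\widehat{\Lambda}$ described just before the proposition; that is, $PCP=B$.

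Since any corner $PCP$ of a $C^*$-algebra (with $P$ a projection) is a hereditary $C^*$-subalgebra, $B$ is hereditary in $A\times_\alpha\widehat{\Lambda}$, as claimed. If one prefers not to invoke the unitality of $A$, the same computation carried out with arbitrary constant functions $b_1,b_2$ valued in $C^*(\mathcal{G}_0)$ in place of $P$ shows that $b_1\cdot f^{p,q}_\lambda\cdot b_2$ is either $0$ or the constant function with value $b_1s_ps_q^*b_2\in C^*(\mathcal{G}_0)$; by linearity in $c$, continuity of multiplication, and closedness of $B$, this yields $BCB\subseteq B$, the standard criterion for heredity. The only step needing any care is this reduction to the generators $f^{p,q}_\lambda$ --- the density of their span, the continuity of the relevant products, and the closedness of $B$ --- after which the whole argument collapses to the orthogonality relations for characters of $\widehat{\Lambda}$; I do not anticipate a genuine obstacle.
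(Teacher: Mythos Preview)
Your proof is correct and rests on the same computation as the paper's: sandwiching a generator $f^{p,q}_\lambda$ between constant functions valued in $B$ and using orthogonality of characters on $\widehat{\Lambda}$ to force $\lambda=0$ and $l(p)=l(q)$. The paper carries this out with arbitrary $b_1,b_2\in B$ to obtain $B\cdot C\cdot B\subseteq B$ directly---exactly the alternative you sketch in your final paragraph. Your main argument adds the observation that $\partial X=V(0)$ is compact, so $A$ is unital and the constant function $P\equiv 1_A$ is a projection in $C$ with $PCP=B$; this exhibits $B$ as a genuine corner rather than merely a hereditary subalgebra, which is a modest strengthening but not a different route. (A minor remark: in your displayed formula the second character should read $\langle l(p)-l(q),t\rangle$ if one computes $f\cdot P$ first; your version with $l(p)-l(q)-\lambda$ arises from computing $P\cdot f$ first, and either way the product of the two integrals vanishes unless $\lambda=0$ and $l(p)=l(q)$, so the conclusion is unaffected.)
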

\begin{proof}
To prove the theorem, we prove that $B~~ \cdot~~ A \times_\alpha \hat{\Lambda}~~
\cdot~~ B\subseteq B$. Since $A \times_\alpha \hat{\Lambda}$ is
generated by functions of the form $f(t) = \langle\lambda,t\rangle
s_ps_q^*$, it suffices to show that $b_1\cdot f \cdot b_2 \in B$
whenever $b_1,b_2\in B$ and $f(t) = \langle\lambda,t\rangle
s_ps_q^*$ for $\lambda \in \Lambda, p,q \in X$.
\begin{align*}
    (b_1\cdot f \cdot b_2)(z) &  = \int_{\hat{\Lambda}}{b_1(t)
   \alpha_t((f\cdot b_2)(t^{-1}z))\,dt}\\
     & = \int_{\hat{\Lambda}}{b_1\alpha_t\left(\int_{\hat{\Lambda}}
     {f(w)\alpha_w(b_2(w^{-1}t^{-1}z)) \,dw}\right) \,dt}\\
     & =  \int_{\hat{\Lambda}}{\int_{\hat{\Lambda}}{b_1\alpha_t(f(w)
     \alpha_w(b_2))} \,dw \,dt}\\
     & = \int_{\hat{\Lambda}}{\int_{\hat{\Lambda}}{b_1
     \alpha_t(\langle\lambda,w\rangle s_ps_q^*)b_2} \,dw \,dt},
      \text{ since $\alpha_w(b_2)=\alpha_t(b_2) = b_2$}\\
     & = \int_{\hat{\Lambda}}{\int_{\hat{\Lambda}}{b_1\langle
     \lambda,w\rangle \alpha_t(s_ps_q^*)b_2} \,dw \,dt}\\
     & =  \int_{\hat{\Lambda}}{\int_{\hat{\Lambda}}
     {b_1\langle\lambda,w\rangle \langle l(p)-l(q),t\rangle
     s_ps_q^*b_2} \,dw \,dt}\\
     & = \int_{\hat{\Lambda}}{\langle\lambda,w\rangle
     \,dw}\int_{\hat{\Lambda}}{\langle l(p)-l(q),t\rangle \,dt}~~b_1s_ps_q^*b_2 \\
     & = 0 \text{ unless $\lambda = 0$ and $l(p)-l(q) =
      0$.}
\end{align*}
And in that case (in the case when $\lambda = 0$ and $l(p)-l(q) =
0$) we get $(b_1\cdot f \cdot b_2)(z) = b_1s_ps_q^*b_2 \in B$
(since $l(p) = l(q)$). Therefore $B$ is hereditary.
\end{proof}

Let $I_B$ denote the ideal in $A \times_\alpha \hat{\Lambda}$
generated by $B$.  The following corollary follows from Theorem
\ref{thm.501} and Proposition \ref{prop.510}.

\begin{corollary} \label{corr.510}
$I_B$ is an AF algebra.
\end{corollary}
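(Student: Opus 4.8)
The plan is to show that $I_B$, being an ideal in a $C^*$-algebra generated by a hereditary AF subalgebra, is itself AF, using two standard structural facts about AF algebras: that an ideal generated by a hereditary subalgebra is the closure of a ``corner sum'' that can be described via the subalgebra, and that the class of AF algebras is closed under the formation of such ideals. Concretely, the key observation is that when $B$ is a hereditary $C^*$-subalgebra of a $C^*$-algebra $C := A \times_\alpha \widehat{\Lambda}$, the ideal $I_B$ it generates satisfies $I_B = \overline{CBC}$, and moreover $B$ is a full hereditary subalgebra of $I_B$. By the Brown stabilization theorem, a full hereditary subalgebra $B$ of $I_B$ is strongly Morita equivalent to $I_B$; in fact $B \otimes \mathcal{K} \cong I_B \otimes \mathcal{K}$. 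Since $B = C^*(\mathcal{G}_0)$ is AF by Theorem~\ref{thm.501}, and $\mathcal{K}$ is AF, $B \otimes \mathcal{K}$ is AF, hence $I_B \otimes \mathcal{K}$ is AF, and therefore $I_B$ (being a hereditary subalgebra — indeed a full corner — of an AF algebra) is AF.

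First I would record that $I_B = \overline{CBC}$ where $C = A \times_\alpha \widehat\Lambda$; this is the standard description of the ideal generated by a subset, and since $B$ is hereditary the span of products $c_1 b c_2$ already forms a dense subalgebra. Next I would verify that $B$ is full in $I_B$, which is immediate since by definition $I_B$ is the smallest ideal containing $B$, so the ideal of $I_B$ generated by $B$ is all of $I_B$. Then I would invoke L.~Brown's theorem: a $\sigma$-unital full hereditary $C^*$-subalgebra $B$ of a $\sigma$-unital $C^*$-algebra $I_B$ satisfies $B \otimes \mathcal{K} \cong I_B \otimes \mathcal{K}$ (here $\sigma$-unitality is automatic since everything in sight is separable — the groupoid $\mathcal{G}$ is second countable by Remark~\ref{remk.501}, so $C^*(\mathcal{G})$ is separable, hence so is the crossed product by the compact group $\widehat\Lambda$ and all its subalgebras). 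Finally I would use the permanence properties of AF algebras: AF is closed under tensoring with $\mathcal{K}$, under isomorphism, and under passing to hereditary subalgebras (Effros, or Bratteli), so from $B$ AF we get $B\otimes\mathcal{K}$ AF, then $I_B\otimes\mathcal{K}$ AF, then $I_B$ AF since $I_B$ is isomorphic to a (full) corner $p(I_B\otimes\mathcal K)p$.

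The main obstacle is purely one of citing the right form of the structural theorems with the right hypotheses satisfied — there is no hard computation. The one point requiring a little care is separability/$\sigma$-unitality, needed for Brown's theorem; this follows because $C^*(\mathcal{G})$ is separable (second countable groupoid) and separability passes to crossed products by second countable groups and to subalgebras and ideals. An alternative route that sidesteps Brown entirely: since $B$ is hereditary in $C$, the ideal $I_B$ has the property that $B$ is a full hereditary subalgebra, and one can argue directly that $I_B$ is an inductive limit of the hereditary subalgebras $\overline{C_n B_n C_n}$ for suitable increasing finite-dimensional $B_n \subseteq B$ and finite subsets, but this is messier; I would prefer the clean Morita-equivalence argument. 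Either way the conclusion is that $I_B$ is AF.
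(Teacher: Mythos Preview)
Your argument is correct and matches the paper's approach: the paper simply states that the corollary follows from Theorem~\ref{thm.501} ($B=C^*(\mathcal{G}_0)$ is AF) and Proposition~\ref{prop.510} ($B$ is hereditary in $A\times_\alpha\widehat{\Lambda}$), leaving the implication implicit. Your proposal fills in exactly the standard justification---$B$ is a full hereditary $\sigma$-unital subalgebra of $I_B$, so Brown's stabilization theorem gives $B\otimes\mathcal{K}\cong I_B\otimes\mathcal{K}$, and the AF property passes through stable isomorphism and hereditary subalgebras---which is precisely what the paper is tacitly invoking.
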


We want to prove that $A \times_\alpha \hat{\Lambda}$ is an AF
algebra, and to do this we consider the dual system.  Define $\hat{\alpha}:\hat{\hat{\Lambda}} = \Lambda \longrightarrow
Aut(A \times_\alpha \hat{\Lambda})$ as follows:
For $\lambda \in \Lambda$ and $f \in C(\hat{\Lambda}, A)$, we
define $\hat{\alpha}_\lambda(f) \in C(\hat{\Lambda}, A)$ by:
$\hat{\alpha}_\lambda(f)(t) = \langle\lambda,t\rangle f(t)$.
Extend $\hat{\alpha}_\lambda$ continuously.

As before we use $\cdot$ to represent multiplication in $A
\times_\alpha \hat{\Lambda}$.

\begin{lemma} \label{lemm.450}
$\hat{\alpha}_\lambda(I_B) \subseteq I_B$ for each $\lambda \geq
0$.
\end{lemma}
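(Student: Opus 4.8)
The plan is to reduce the statement to a claim about generators. Since $I_B$ is the closed ideal of $A \times_\alpha \hat{\Lambda}$ generated by $B = C^*(\mathcal{G}_0)$, it is the closed linear span of products $x \cdot b \cdot y$ with $b \in B$ and $x,y \in A \times_\alpha \hat{\Lambda}$; by density it suffices to take $x,y$ in the generating set of functions $f(t) = \langle \lambda, t\rangle s_p s_q^*$. Because $\hat{\alpha}_\lambda$ is a $*$-automorphism that is norm-continuous, it is enough to show $\hat{\alpha}_\lambda$ sends one such product into $I_B$, and in fact it is cleanest to first record how $\hat{\alpha}_\lambda$ acts on the generators themselves and on the constant functions coming from $B$.

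First I would compute $\hat{\alpha}_\lambda$ on a generator: if $f(t) = \langle \mu, t\rangle s_p s_q^*$ then $\hat{\alpha}_\lambda(f)(t) = \langle \lambda, t\rangle \langle \mu, t\rangle s_p s_q^* = \langle \lambda + \mu, t\rangle s_p s_q^*$, i.e. $\hat{\alpha}_\lambda$ simply shifts the ``frequency'' $\mu$ by $\lambda$ and leaves the coefficient $s_p s_q^*$ untouched. Next I would observe that for $b \in B = C^*(\mathcal{G}_0)$, viewed as the constant function $b(t) = b$, the same formula gives $\hat{\alpha}_\lambda(b)(t) = \langle \lambda, t\rangle b$, which is the generator-type function with frequency $\lambda$ and coefficient $b \in B$. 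The key point to extract is that each $\hat{\alpha}_\lambda(b)$, for $b$ of the form $s_r s_s^*$ with $l(r) = l(s)$, can be written as $\hat{\alpha}_\lambda(b) = g_\lambda \cdot b$ where $g_\lambda$ is the function $g_\lambda(t) = \langle \lambda, t\rangle 1$ inside the (multiplier algebra of the) crossed product — or, avoiding multipliers, that $\hat{\alpha}_\lambda(b) = \hat{\alpha}_\lambda(b_1) \cdot b_2$ for a suitable factorization $b = b_1 b_2$ in $B$, using that $B$ has an approximate identity and $b_2 b b_2' \to b$. Either way, the moral is: applying $\hat{\alpha}_\lambda$ to an element of $B$ lands in $(A\times_\alpha \hat\Lambda)\cdot B \subseteq I_B$.

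With that in hand the argument closes quickly: a typical element of $I_B$ is a limit of sums of $x b y$ with $x,y$ generators and $b \in B$, and $\hat{\alpha}_\lambda(xby) = \hat{\alpha}_\lambda(x)\,\hat{\alpha}_\lambda(b)\,\hat{\alpha}_\lambda(y)$; the middle factor $\hat{\alpha}_\lambda(b)$ lies in $I_B$ by the previous paragraph, and $I_B$ is an ideal, so the whole product is in $I_B$. Passing to sums and norm limits (legitimate since $\hat{\alpha}_\lambda$ is isometric) gives $\hat{\alpha}_\lambda(I_B) \subseteq I_B$. Note the hypothesis $\lambda \geq 0$ is not actually needed for containment in this direction — it will matter only later, presumably to get equality $\hat{\alpha}_\lambda(I_B) = I_B$ or an increasing chain of ideals — so I would either prove the statement for all $\lambda$ and remark that the $\lambda \geq 0$ case is what is used, or simply restrict as stated.

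I expect the only real subtlety to be the bookkeeping in the second paragraph: making precise, without invoking multiplier algebras, that $\hat{\alpha}_\lambda$ carries $B$ into $I_B$ rather than merely into $A \times_\alpha \hat\Lambda$. The clean way is to use that $B$ is hereditary (Proposition \ref{prop.510}) with an approximate unit $(e_i) \subseteq B$: then for $b \in B$ one has $\hat{\alpha}_\lambda(b) = \lim_i \hat{\alpha}_\lambda(e_i b) = \lim_i \hat{\alpha}_\lambda(e_i)\,b \in \overline{(A\times_\alpha\hat\Lambda)\cdot B} \subseteq I_B$, and this is the step that deserves to be spelled out.
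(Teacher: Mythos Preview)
There is a genuine gap, and it is precisely at the step you flagged as ``the only real subtlety.'' Your key identity
\[
\hat{\alpha}_\lambda(e_i b) \;=\; \hat{\alpha}_\lambda(e_i)\cdot b
\]
is false in the crossed product. As an automorphism, $\hat{\alpha}_\lambda(e_i\cdot b)=\hat{\alpha}_\lambda(e_i)\cdot\hat{\alpha}_\lambda(b)$, so read that way the argument is circular. Read instead at the level of functions, the left side is $t\mapsto\langle\lambda,t\rangle e_ib$, while the right side is the convolution
\[
(\hat{\alpha}_\lambda(e_i)\cdot b)(s)=\int_{\hat{\Lambda}}\langle\lambda,t\rangle e_i\,\alpha_t(b)\,dt
= e_ib\int_{\hat{\Lambda}}\langle\lambda,t\rangle\,dt=0\quad(\lambda\neq 0),
\]
since $b\in B$ is $\alpha$-fixed. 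The same computation kills $b\cdot\hat{\alpha}_\lambda(e_i)$. So the approximate-unit trick does not place $\hat{\alpha}_\lambda(b)$ in $\overline{(A\times_\alpha\hat{\Lambda})\cdot B}$, and your reduction to ``$\hat{\alpha}_\lambda(B)\subseteq I_B$'' is not established.

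Relatedly, your remark that ``$\lambda\geq 0$ is not actually needed'' is a warning sign, and indeed the paper's proof uses that hypothesis in an essential way. Because $\lambda\geq 0$, one may regard $\lambda$ as an element $\lambda'\in G_1=\Lambda^+\subseteq X$, so that $s_{\lambda'}$ is a genuine isometry in $A$ with $s_{\lambda'}^*s_{\lambda'}=1$. The paper computes $\hat{\alpha}_\lambda(f\cdot b\cdot g)$ directly for generators $f,g$ and $b=s_{p_0}s_{q_0}^*\in B$, then inserts $s_{\lambda'}^*s_{\lambda'}$ on both sides of $b$ to rewrite the result as $f'\cdot b'\cdot g'$ with
\[
b'=s_{\lambda'p_0}s_{\lambda'q_0}^*\in B,
\]
the extra character $\langle\lambda,\cdot\rangle$ being absorbed into the frequency of $g'$. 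That algebraic absorption---only available when $\lambda\in\Lambda^+$---is exactly the missing idea in your attempt.
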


\begin{proof}
Since the functions of the form $f(t) = \langle\lambda,t\rangle
s_ps_q^*$ make a generating set for $A \times_\alpha
\hat{\Lambda}$, it suffices to show that if $\lambda > 0$ then
$\hat{\alpha}_\lambda(f\cdot b \cdot g) \in I_B$ for $f(t) =
\langle\lambda_1,t\rangle s_{p_1}s_{q_1}^*$, $g(t) =
\langle\lambda_2,t\rangle s_{p_2}s_{q_2}^*$, and $b =
s_{p_0}s_{q_0}^*$, with $l(p_0) = l(q_0)$.\\
First
\begin{align*}
   (f\cdot b \cdot g)(z) &  = \int_{\hat{\Lambda}}{f(t) \alpha_t((b\cdot g)(t^{-1}z))\,dt}\\
     & = \int_{\hat{\Lambda}}{f(t) \alpha_t\left[\int_{\hat{\Lambda}}{b(w) \alpha_w(g(w^{-1}t^{-1}z))\,dw}\right]\,dt}\\
     & = \int_{\hat{\Lambda}}{f(t) \left[\int_{\hat{\Lambda}}{b \alpha_{tw}(g(w^{-1}t^{-1}z)\,dw)}\right]\,dt}\\
     & = \int_{\hat{\Lambda}}{f(t) \left[\int_{\hat{\Lambda}}{b \alpha_{w}(g(w^{-1}z)\,dw)}\right]\,dt}\\
     & = \int_{\hat{\Lambda}}{\int_{\hat{\Lambda}}{f(t)b \alpha_{w}(g(w^{-1}z))\,dw}\,dt}\\
     & = \int_{\hat{\Lambda}}{\int_{\hat{\Lambda}}{\langle\lambda_1,t\rangle s_{p_1}s_{q_1}^*s_{p_0}s_{q_0}^* \langle\lambda_2,w^{-1}z\rangle\alpha_{w}(s_{p_2}s_{q_2}^*)\,dw}\,dt}\\
     & = \int_{\hat{\Lambda}}{\int_{\hat{\Lambda}}{\langle\lambda_1,t\rangle s_{p_1}s_{q_1}^*s_{p_0}s_{q_0}^* \langle\lambda_2,w^{-1}z\rangle\langle l(p_2)-l(q_2),w\rangle s_{p_2}s_{q_2}^*\,dw}\,dt}.
\end{align*}
Hence
\begin{align*} & \hat{\alpha}_\lambda(f\cdot b \cdot g)(z)\\
& = \langle\lambda,z\rangle\int_{\hat{\Lambda}}{\int_{\hat{\Lambda}}{\langle\lambda_1,t\rangle s_{p_1}s_{q_1}^*s_{p_0}s_{q_0}^* \langle\lambda_2,w^{-1}z\rangle\langle l(p_2)-l(q_2),w\rangle s_{p_2}s_{q_2}^*\,dw}\,dt}\\
      & = \int_{\hat{\Lambda}}{\int_{\hat{\Lambda}}{\langle\lambda_1,t\rangle s_{p_1}s_{q_1}^*s_{p_0}s_{q_0}^*
     \langle\lambda,w^{-1}z\rangle\langle\lambda,w\rangle\langle\lambda_2,w^{-1}z\rangle\nonumber
     \langle l(p_2)-l(q_2),w\rangle s_{p_2}s_{q_2}^*\,dw}\,dt}\\
      & = \int_{\hat{\Lambda}}{\int_{\hat{\Lambda}}{\langle\lambda_1,t\rangle s_{p_1}s_{q_1}^*s_{p_0}s_{q_0}^* \langle\lambda+\lambda_2,w^{-1}z\rangle\langle \lambda+l(p_2)-l(q_2),w\rangle s_{p_2}s_{q_2}^*\,dw}\,dt}\\
\end{align*}

Letting $\lambda' = \lambda \in G_1$, then the last integral gives us:

\begin{align*}
    & = \int_{\hat{\Lambda}}\int_{\hat{\Lambda}}\langle\lambda_1,t\rangle s_{p_1}s_{q_1}^*s_{\lambda'}^* s_{\lambda'} s_{p_0}s_{q_0}^* s_{\lambda'}^* s_{\lambda'}\langle\lambda+\lambda_2,w^{-1}z\rangle\\
    & \hspace{2.5 in}\langle \lambda+l(p_2)-l(q_2),w\rangle s_{p_2}s_{q_2}^*\,dw\,dt\\
    & = \int_{\hat{\Lambda}}\int_{\hat{\Lambda}}\langle\lambda_1,t\rangle s_{p_1}s_{\lambda' q_1}^* s_{\lambda' p_0}s_{\lambda' q_0}^* \langle\lambda+\lambda_2,w^{-1}z\rangle\\
    & \hspace{2.5in} \langle \lambda+l(p_2)-l(q_2),w\rangle s_{\lambda' p_2}s_{q_2}^*\,dw\,dt\\
    & = (f' \cdot b' \cdot g')(z),\\
\end{align*}
where $f'(t) = \langle\lambda_1,t\rangle s_{p_1}s_{\lambda'
q_1}^*,~~ g'(t) = \langle\lambda + \lambda_2,t\rangle s_{\lambda'
p_2}s_{q_2}^*$, and $b' = s_{\lambda' p_0}s_{\lambda' q_0}^*$.
Therefore $\displaystyle \hat{\alpha}_\lambda(f\cdot b \cdot g) \in I_B$.
\end{proof}

For each $\lambda \in \Lambda$ define $I_\lambda :=
\hat{\alpha}_\lambda(I_B)$.  Clearly each $I_\lambda$ is an ideal
of $A \times_\alpha \hat{\Lambda}$ and is an AF algebra.
Let $\lambda_1 < \lambda_2$ then $\lambda_2 - \lambda_1 > 0
\Rightarrow I_{\lambda_2 - \lambda_1} =
\hat{\alpha}_{\lambda_2-\lambda_1}(I_B) \subseteq I_B$.  Therefore
$I_{\lambda_2} = \hat{\alpha}_{\lambda_2}(I_B) =
\hat{\alpha}_{\lambda_1}(\hat{\alpha}_{\lambda_2-\lambda_1}(I_B))
\subseteq I_{\lambda_1}$.  That is, $I_{\lambda_1} \supseteq
I_{\lambda_2}$ whenever $\lambda_1 < \lambda_2$.  In particular
$I_B = I_0 \supseteq I_\lambda~~ \textrm{for each } \lambda \geq
0.$  Furthermore, if $f \in C(\hat{\Lambda}, A)$ is given by $f(t) =
\langle\lambda,t\rangle s_ps_q^*$, for $\lambda \in \Lambda$ and
$p,q \in X$ then $\hat{\alpha}_\beta(f)(t) = \langle\beta,t\rangle
f(t) = \langle\beta,t\rangle\langle\lambda,t\rangle s_ps_q^* =
\langle\beta+\lambda,t\rangle s_ps_q^*$.

For $f \in C(\mathcal{G})$ given by $f(t) = \langle\lambda,
t\rangle s_ps_q^*$,  let us compute $f^*$, $f \cdot f^*$, and $f^*
\cdot f$ so we can use them in the next lemma.
\begin{align*}
   f^*(t) &  = \alpha_t(f(t^{-1})^*)\\
           & = \alpha_t\left(\left(\langle\lambda, t^{-1}\rangle s_ps_q^*\right)^*\right)\\
           & = \overline{\langle\lambda, t^{-1}\rangle} \alpha_t\left(s_qs_p^*\right)\\
           & = \langle\lambda, t\rangle \langle l(q)-l(p), t\rangle s_qs_p^*\\
           & = \langle\lambda + l(q)-l(p), t\rangle s_qs_p^*,\\
\end{align*}

\begin{align*}
   (f \cdot f^*)(z) &  = \int_{\hat{\Lambda}}{f(t) \alpha_t(f^*(t^{-1}z))\,dt}\\
     & = \int_{\hat{\Lambda}}{\langle \lambda, t\rangle s_ps_q^* \alpha_t\left(\langle\lambda+l(q)-l(p),t^{-1}z\rangle s_qs_p^*\right)\,dt}\\
     & = \int_{\hat{\Lambda}}{\langle \lambda, t\rangle s_ps_q^* \langle\lambda + l(q)-l(p),t^{-1}z\rangle \langle l(q)-l(p),t\rangle s_qs_p^*\,dt}\\
     & = \int_{\hat{\Lambda}}{\langle \lambda+l(q)-l(p), t\rangle s_ps_q^* \langle\lambda + l(q)-l(p),t^{-1}z\rangle  s_qs_p^*\,dt}\\
     & = \int_{\hat{\Lambda}}{\langle \lambda+l(q)-l(p), z\rangle s_ps_q^* s_qs_p^*\,dt}\\
     & = \langle \lambda+l(q)-l(p), z\rangle s_ps_q^*s_qs_p^*\\
     & = \langle \lambda+l(q)-l(p), z\rangle s_ps_p^*,\\
\end{align*}
and
\begin{align*}
   (f^* \cdot f)(z) &  = \langle \big(\lambda + l(q)-l(p)\big) + l(p)-l(q), z\rangle s_qs_q^*\\
     & = \langle \lambda, z\rangle s_qs_q^*.\\
\end{align*}

\begin{lemma} \label{lemm.520}
Let $\lambda \in \Lambda$, $p,q \in X$, $f(t) =
\langle\lambda,t\rangle s_qs_q^*$, and let $g(t) =
\langle\lambda,t\rangle s_ps_q^*$.
\begin{enumerate}
\item[(a)] If $\lambda \geq 0$ then $f \in I_B$.

\item[(b)] If $\lambda + l(q) \geq l(p)$ then $g \in I_B$.

\end{enumerate}
\end{lemma}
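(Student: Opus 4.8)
The plan is to reduce everything to the ideal-generation structure already established, using the computations of $f\cdot f^*$ and $f^*\cdot f$ carried out just before the statement. The key observation is that $B = C^*(\mathcal{G}_0)$ consists (densely) of the elements $s_us_v^*$ with $l(u) = l(v)$, together with the constant functions representing them in $A\times_\alpha\hat\Lambda$, and that $I_B$ is by definition the ideal generated by these. So for part (a), I would write $\lambda \in \Lambda$, $\lambda\ge 0$, and pick a word $r\in X$ with $l(r) = \lambda$; this is possible since $\lambda\ge 0$ in the lexicographic order means $\lambda = (k_1,\dots,k_n)$ with the first nonzero entry positive, and one can realize such a $\lambda$ as $l(r)$ for a suitable alternating word $r$ in $G_1,G_2$ (indeed $\lambda = l(r)$ for $r$ a single element of $G_1$ when $\lambda>0$, or $r = 0$ when $\lambda = 0$). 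Then consider the function $h(t) = \langle l(r),t\rangle s_{rq}s_q^*$. Since $l(rq) = l(r)+l(q) = \lambda + l(q)$ and $l(q)$, this is not in $B$ unless $\lambda = 0$; but using the formula $(h^*\cdot h)(z) = \langle \lambda', z\rangle s_w s_w^*$ type identity computed above — specifically with the roles arranged so that $h^*\cdot h$ produces $\langle\lambda,z\rangle s_qs_q^* = f$ — we can exhibit $f$ as $h^*\cdot b \cdot h$ or $h^*\cdot h$ with $h\in A\times_\alpha\hat\Lambda$ and the middle factor in $B$, hence $f\in I_B$. The cleanest route: take $h(t) = \langle l(rq)-l(q),t\rangle s_{rq}s_q^*$ (note $l(rq)-l(q) = l(r) = \lambda$, so $h = \langle\lambda,t\rangle s_{rq}s_q^*$), apply the $f^*\cdot f$ computation with $p\rightsquigarrow rq$, $q\rightsquigarrow q$, $\lambda\rightsquigarrow\lambda$, which gives $(h^*\cdot h)(z) = \langle\lambda,z\rangle s_qs_q^*= f(z)$; and $h^*\cdot h = h^*\cdot(hh^*)\cdot h$ with $hh^*\in B$ because by the $f\cdot f^*$ computation $(h\cdot h^*)(z) = \langle \lambda + l(q)-l(rq),z\rangle s_{rq}s_{rq}^* = s_{rq}s_{rq}^*$, which lies in $B$ since $l(rq)=l(rq)$. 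Thus $f = h^*\cdot(h h^*)\cdot h\in I_B$.

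For part (b), I would argue similarly but now absorbing the extra length into the first leg. Suppose $\lambda + l(q)\ge l(p)$. Pick $r\in X$ with $l(r) = \lambda + l(q) - l(p)\ge 0$ (again realizable as a length since this element of $\Lambda$ is $\ge 0$). Consider $h(t) = \langle\mu,t\rangle s_{p}s_{rq}^*$ with $\mu$ chosen so that $h\in B$, i.e. we want $l(p) = l(rq) = l(r)+l(q) = \lambda + l(q)$ — wait, that forces $\lambda = l(p)-l(q)$, too restrictive; instead I would build $g$ as a product $h_1\cdot b\cdot h_2$ with $b = s_{p}s_{p}^*$ (or $s_{rq}s_{rq}^*$) in $B$ and $h_1, h_2$ generating functions, using part (a) applied to $s_{rq}s_{rq}^*$-type elements to pull these into $I_B$. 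Concretely: since $l(rq) = l(p) + \lambda' $ is not what we want — let me instead use $s_p^* s_{rq} = s_r$ when $rq = p r$... The right manipulation is: $g(t) = \langle\lambda,t\rangle s_ps_q^*$, and $s_ps_q^* = s_p s_r^* \cdot s_{rq}s_{rq}^* \cdot (s_{rq}s_q^*)$ — no. The robust approach is: write $g = h_1^* \cdot c \cdot h_2$ where $h_1(t) = \langle \beta_1,t\rangle s_{rq}s_p^*$ with $l(rq) = l(p) + l(r) $... and $l(r)$ was chosen as $\lambda+l(q)-l(p)$, so $l(rq) = \lambda + l(q)$, hence $h_1^*$ has first leg $p$ and second leg $rq$; then $c\in B$ absorbs $s_{rq}s_{rq}^*$, and $h_2(t) = \langle\beta_2,t\rangle s_{rq}s_q^*$ has $l(rq)-l(q) = l(r) + l(q) - l(q)$... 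Since $l(rq)\ne l(q)$ in general, $h_2\notin B$, but by part (a) applied with the appropriate shift — actually $h_2$ has the form $\langle\beta_2,t\rangle s_{rq}s_q^*$ which is a \emph{generating function} of $A\times_\alpha\hat\Lambda$, so $h_1^* c\, h_2$ is automatically in $I_B$ once $c\in B$, by the definition of $I_B$ as an ideal. So the only real content of (b) is choosing $\beta_1,\beta_2$ so that the convolution $h_1^*\cdot c\cdot h_2$ equals $g$, which is a bookkeeping exercise in the exponents using the two displayed product formulas.

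The main obstacle I anticipate is the exponent bookkeeping in part (b): one must track the $\Lambda$-valued "phases" $\langle\cdot,t\rangle$ through the double-integral convolutions and check that the Haar-integral orthogonality relations $\int_{\hat\Lambda}\langle\mu,t\rangle\,dt = \delta_{\mu,0}$ collapse the expression to exactly the single term $\langle\lambda,t\rangle s_ps_q^*$ rather than $0$ or a term with a shifted exponent; getting the sign of $l(r)$ and the order of the three factors right is where an error would creep in. Everything else — realizing a nonnegative element of $\Lambda$ as a length $l(r)$, and the fact that $s_us_v^*$ with $l(u)=l(v)$ lies in $B$ — is immediate from Section \ref{sect.z23} and the defining property of the lexicographic order on $\mathds{Z}^n$. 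I would present (a) in full detail as above and then note that (b) follows by the same device with $r$ chosen of length $\lambda + l(q) - l(p)$ inserted on the left.
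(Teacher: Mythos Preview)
Your part~(a) is correct but works harder than necessary. The paper dispatches (a) in one line using Lemma~\ref{lemm.450}: the constant function $b(t)=s_qs_q^*$ lies in $B\subseteq I_B$, and $f=\hat\alpha_\lambda(b)$, so $f\in\hat\alpha_\lambda(I_B)\subseteq I_B$ since $\lambda\ge 0$. Your construction of $h(t)=\langle\lambda,t\rangle s_{rq}s_q^*$ with $l(r)=\lambda$ and the verification that $hh^*\in B$ is essentially re-deriving a special case of the mechanism inside the proof of Lemma~\ref{lemm.450} (where the paper inserts $s_{\lambda'}^*s_{\lambda'}$). One small gap: you assert $f=h^*\cdot(hh^*)\cdot h$ without comment, but in general $h^*(hh^*)h=(h^*h)^2$; the equality holds here only because $h$ is a partial isometry (equivalently, $f$ is a projection), which you should check or, better, just note that $hh^*\in I_B$ forces $h\in I_B$ and hence $f=h^*h\in I_B$.

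Part~(b) is where your proposal genuinely stalls. You cycle through several candidate factorizations $g=h_1^*\cdot c\cdot h_2$ without settling on one, and you correctly identify the exponent bookkeeping as the obstacle. The paper avoids all of this with a one-line trick you have missed: using the displayed computation immediately preceding the lemma, $(g\cdot g^*)(z)=\langle\lambda+l(q)-l(p),z\rangle\,s_ps_p^*$, which is exactly of the form in part~(a) with nonnegative exponent $\lambda+l(q)-l(p)\ge 0$. Hence $gg^*\in I_B$ by (a), and since $I_B$ is a closed ideal in a $C^*$-algebra, $gg^*\in I_B$ implies $g\in I_B$. No factorization of $g$ itself is needed.
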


\begin{proof}
To prove (a),  $s_qs^*_q \in C^*(\mathcal{G}_0) \subseteq I_B$. Then $f \in I_B$, since $\lambda \geq 0$, by Lemma \ref{lemm.450}.
To prove (b), $(g \cdot g^*)(z) = \langle \lambda + l(q) -
l(p),z\rangle s_ps_p^*$.  By (a), $g \cdot g^* \in I_B$, implying
$g \in I_B$.
\end{proof}

\begin{theorem} $A \times_\alpha \hat{\Lambda}$ is an AF algebra.
\end{theorem}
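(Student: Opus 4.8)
The plan is to realize $A \times_\alpha \hat{\Lambda}$ as the closure of an increasing sequence of AF subalgebras, which is then automatically an AF algebra. Put $e_1 = (1,0,\ldots,0) \in \Lambda$ and, for $k \in \mathds{N}$, set $J_k := I_{-ke_1}$. By the remarks following Lemma~\ref{lemm.450} (using Corollary~\ref{corr.510} and the fact that $\hat{\alpha}_{-ke_1}$ is an automorphism carrying $I_B$ onto $J_k$), each $J_k$ is an AF ideal of $A \times_\alpha \hat{\Lambda}$, and from $-(k+1)e_1 < -ke_1$ together with the monotonicity $\lambda_1 < \lambda_2 \Rightarrow I_{\lambda_1} \supseteq I_{\lambda_2}$ we get $J_1 \subseteq J_2 \subseteq \cdots$. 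Hence $\bigcup_k J_k$ is a $*$-subalgebra of $A \times_\alpha \hat{\Lambda}$.

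The main step is to show $\bigcup_k J_k$ is dense. Since the functions $f(t) = \langle\lambda,t\rangle s_p s_q^*$ ($\lambda \in \Lambda$, $p,q \in X$) generate $A \times_\alpha \hat{\Lambda}$, it suffices to put each such $f$ into some $J_k$. Now $f \in J_k = \hat{\alpha}_{-ke_1}(I_B)$ if and only if $\hat{\alpha}_{ke_1}(f) \in I_B$, and $\hat{\alpha}_{ke_1}(f)(t) = \langle ke_1,t\rangle\langle\lambda,t\rangle s_p s_q^* = \langle\lambda + ke_1,t\rangle s_p s_q^*$. By Lemma~\ref{lemm.520}(b), this function lies in $I_B$ whenever $(\lambda + ke_1) + l(q) \geq l(p)$, that is, whenever $ke_1 \geq \nu$, where $\nu := l(p) - l(q) - \lambda$ is a fixed element of $\mathds{Z}^n$ (note $l(p), l(q)$ are finite for $p,q \in X$). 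Writing $\nu = (\nu_1,\ldots,\nu_n)$, the lexicographic order yields $ke_1 = (k,0,\ldots,0) > \nu$ as soon as $k > \nu_1$, so $f \in J_k$ for all large $k$. Therefore $\overline{\bigcup_k J_k} = A \times_\alpha \hat{\Lambda}$.

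To finish, I would use that $A \times_\alpha \hat{\Lambda}$ is separable (it is the crossed product of the separable $C^*$-algebra $A = C^*(\mathcal{G})$ by the compact metrizable group $\hat{\Lambda}$) together with the local characterization of AF algebras: given a finite set $F \subseteq A \times_\alpha \hat{\Lambda}$ and $\varepsilon > 0$, approximate $F$ within $\varepsilon/2$ by a finite subset of some $J_k$, and then, $J_k$ being AF, approximate that subset within $\varepsilon/2$ by a finite-dimensional subalgebra of $J_k$; this is a finite-dimensional subalgebra of $A \times_\alpha \hat{\Lambda}$ that is $\varepsilon$-close to $F$, so $A \times_\alpha \hat{\Lambda}$ is AF. I expect the density argument of the second paragraph to be the main obstacle: it requires pinning down the generating set of the crossed product, tracking how $\hat{\alpha}_{ke_1}$ interacts with the coefficient $\langle\lambda,t\rangle$, and, crucially, using that $(-ke_1)_k$ is cofinal downward in $(\mathds{Z}^n,\leq_{\mathrm{lex}})$ so that $ke_1$ eventually dominates any prescribed $l(p)-l(q)-\lambda$. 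Once density is in place, the passage to AF is routine given separability.
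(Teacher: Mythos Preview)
Your argument is correct and follows essentially the same route as the paper: both show that every generator $f(t)=\langle\lambda,t\rangle s_ps_q^*$ lies in some $I_{-\beta}$ by applying $\hat{\alpha}_\beta$ and invoking Lemma~\ref{lemm.520}(b), and then conclude that $A\times_\alpha\hat{\Lambda}$ is the closure of an increasing union of the AF ideals $I_\lambda$. The only cosmetic difference is that you single out the cofinal sequence $\beta=ke_1$ (yielding a genuine increasing \emph{sequence} $J_k$) and spell out the local-approximation step, whereas the paper works with the full directed family $\{I_\lambda:\lambda\le 0\}$ and leaves that last step implicit.
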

\begin{proof}
Let $f(t) = \langle \lambda, t\rangle s_ps_q^*$.  Choose $\beta
\in \Lambda$ large enough such that $\beta + \lambda + l(q) \geq
l(p)$.  Then
\begin{align*}
   \hat{\alpha}_\beta(f)(z) &  = \langle \beta, z\rangle \langle \lambda, z\rangle s_ps_q^*\\
     & = \langle \beta + \lambda, z\rangle s_ps_q^*.\\
\end{align*}

Applying Lemma \ref{lemm.520} (b),  $\hat{\alpha}_\beta(f) \in
I_B$.  Thus
$\hat{\alpha}_{-\beta}\left(\hat{\alpha}_\beta(f)\right) \in
I_{-\beta}$, implying $f \in I_{-\beta}$.
Therefore $\displaystyle{A \times_\alpha \hat{\Lambda} =
\overline{\bigcup_{\lambda \leq 0}{I_\lambda}}}$.  Since each
$I_\lambda$ is an AF algebra, so is $A \times_\alpha
\hat{\Lambda}$.
\end{proof}

\section{Final results}

Let us recall that an $r$-discrete groupoid $G$ is \textit{locally
contractive} if for every nonempty open subset $U$ of the unit
space there is an open $G$-set $Z$ with $s(\bar{Z}) \subseteq U$
and $r(\bar{Z}) \subsetneqq s(Z)$.  A subset $E$ of the unit space
of a groupoid $G$ is said to be invariant if its saturation $[E] =
r(s^{-1}(E))$ is equal to $E$.

An $r$-discrete groupoid $G$ is
\textit{essentially free} if the set of all $x$'s in the unit
space $G^0$ with $r^{-1}(x) \cap s^{-1}(x) = \{x\}$ is dense in
the unit space.  When the only open invariant subsets of $G^0$ are
the empty set and $G^0$ itself, then we say that $G$ is
\textit{minimal}.

\begin{lemma} \label{lemm.555}
$\mathcal{G}$ is locally contractive, essentially free and
minimal.
\end{lemma}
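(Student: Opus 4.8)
The strategy is to verify the three properties one at a time, in each case exploiting the concrete combinatorial description of $\partial X$ and the $\mathcal{G}$-sets $[p,q]_A$ developed in Sections 2--3. Throughout I will use the identification of the unit space with $\{(x,0,x):x\in\partial X\}$ and the fact that the sets $V(p)$ (and $V(p;q)$) form a base for the topology; a nonempty open subset $U\subseteq\partial X$ therefore contains some basic set $V(\mu)$.

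\textbf{Local contractivity.} Given a nonempty open $U$, pick $\mu\in X$ with $V(\mu)\subseteq U$. Using the decomposition $V(\mu)=V(\mu e_n')\cup V(\mu e_n'')$ (disjointly), put $Z=[\mu,\mu e_n']_{V(e_n')}$, i.e. the $\mathcal{G}$-set consisting of triples $(\mu e_n' y,\,l(\mu)-l(\mu e_n'),\,\mu e_n' e_n' y)$ --- more cleanly, take $Z=[\mu,\mu e_n']_{\partial X}$ in the notation $U(\mu,\mu e_n')$. Its closure is compact (Proposition before Remark \ref{remk.501}), $s(\bar Z)=V(\mu e_n')\subseteq V(\mu)\subseteq U$, while $r(\bar Z)=V(\mu e_n' e_n')$ or rather $r(Z)=V(\mu)$ is strictly larger than $s(Z)=V(\mu e_n')$ because $V(\mu)\setminus V(\mu e_n')=V(\mu e_n'')\neq\emptyset$. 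I would phrase $Z$ so that $r(Z)=V(\mu)$ and $s(Z)=V(\mu e_n')\subsetneq V(\mu)$; one checks directly from Definition of $[p,q]_A$ that such a $\mathcal G$-set exists. This step is routine once the right $\mathcal{G}$-set is written down.

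\textbf{Minimality.} Let $E\subseteq\partial X$ be open, invariant, nonempty; I must show $E=\partial X$. Since $E$ is open it contains some $V(\mu)$. Invariance under $[E]=r(s^{-1}(E))$ means: if $x\in E$ and $x\sim_k y$ then $y\in E$. First, shift-equivalence lets us delete any finite prefix: if $\mu z\in E$ then $z\in E$ (apply $\sim_k$ with $p=\mu$, $q=0$), so $V(0)=\partial X$... wait, more carefully: from $V(\mu)\subseteq E$ and prefix-deletion I get $\partial X=V(0)$ has the property that every $z\in\partial X$ equals the tail of some element $\mu z\in V(\mu)\subseteq E$, hence $z\in E$; therefore $E=\partial X$. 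The only subtlety is that elements of $\partial X$ may be \emph{finite} sequences ending in $\partial\Lambda^+$, so one must also check that such $z$ arises as a tail --- it does, since $\mu z\in V(\mu)$ for any $z\in\partial X$ by definition of $V(\mu)$. So minimality follows almost immediately from the definition of $\sim_k$.

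\textbf{Essential freeness.} I need a dense set of $x\in\partial X$ with trivial isotropy, i.e. $x\sim_k x\Rightarrow k=0$. If $x=pz=qz$ with $l(p)\neq l(q)$ then $x$ is "eventually periodic" in a suitable sense; the aperiodic $x$ have trivial isotropy. Concretely, an $x$ with $x\sim_k x$, $k\neq 0$, must be an infinite word built by repeating a fixed block (so that deleting a prefix of $l(p)$ and a prefix of $l(q)$ leaves the same tail); in particular its length $l(x)$ has all coordinates $\infty$ or follows a periodic pattern forcing eventual periodicity. Given any basic $V(\mu)$, I will exhibit $x\in V(\mu)$ that is \emph{not} eventually periodic --- e.g. take $x=\mu\, a_1 b_1 a_2 b_2\cdots$ with the $a_i\in G_1$, $b_i\in G_2$ chosen (say all equal to $e_n$ in alternating copies, but with block-\emph{lengths} growing, like one $e_n$, then two, etc.) so no tail coincides with a shifted tail. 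Then $V(\mu)\cap\{x:\text{trivial isotropy}\}\neq\emptyset$, giving density.

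\textbf{Expected main obstacle.} The genuinely delicate part is essential freeness: I must pin down precisely which $x\in\partial X$ can satisfy $x\sim_k x$ with $k\neq 0$, taking proper account of (i) the two "colors" $G_1,G_2$ forcing an alternation constraint on any period, and (ii) the possibility that $x$ is a \emph{finite} word terminating in $\partial\Lambda^+$, where $x\sim_k x$ can happen only degenerately. Once the structure of periodic points is understood, constructing a non-periodic $x$ inside each $V(\mu)$ is a standard diagonal/growing-block argument. I expect the writeup to spend most of its length on the isotropy analysis for essential freeness, with local contractivity and minimality each dispatched in a few lines using the $\mathcal{G}$-set $[\mu,\mu e_n']_{\partial X}$ and prefix-deletion via $\sim_k$, respectively.
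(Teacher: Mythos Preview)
Your plan matches the paper's proof in structure and emphasis: local contractivity and minimality are each dispatched in a few lines via an explicit $\mathcal{G}$-set and prefix-deletion, while essential freeness carries the real work (characterize points with nontrivial isotropy as eventually periodic words $q\,ddd\ldots$, then exhibit an aperiodic element inside every basic open set).

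One slip to fix: in the locally contractive step you have the roles of $r$ and $s$ reversed. The definition used in the paper requires $s(\bar Z)\subseteq U$ and $r(\bar Z)\subsetneqq s(Z)$, i.e.\ the \emph{range} must be the smaller set. Your $Z=U(\mu,\mu e_n')=[\mu,\mu e_n']_{\partial X}$ has $r(Z)=V(\mu)\supsetneqq V(\mu e_n')=s(Z)$, which is the wrong direction. Either replace $Z$ by $Z^{-1}=U(\mu e_n',\mu)$, or do what the paper does: take $Z=[\mu,0]_{V(\mu)}$ (with $\mu\neq 0$), so that $s(Z)=V(\mu)\subseteq U$ and $r(Z)=\mu V(\mu)\subsetneqq V(\mu)=s(Z)$.

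For essential freeness, the paper makes your growing-block idea concrete by taking $a_i=(i,0,\ldots,0)$, alternating between $G_1$ and $G_2$, and setting $t=\eta\,a_1a_2a_3\ldots$; the strictly increasing first coordinates immediately rule out any representation $t=q\,ddd\ldots$. Your ``all letters equal $e_n$'' suggestion, read literally, would produce a periodic word, so you do need the growth you allude to; once you impose it, your argument and the paper's coincide. Minimality in the paper is argued exactly as you propose: for any $x\in\partial X$ one has $p\nu x\in V(p;q)\subseteq E$ for suitable $\nu$, and $(x,-l(p\nu),p\nu x)\in s^{-1}(E)$ gives $x\in r(s^{-1}(E))=E$.
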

\begin{proof}
To prove that $\mathcal{G}$ is locally contractive, let $U
\subseteq \mathcal{G}^0$ be nonempty open.  Let $V(p;q) \subseteq
U$.  Choose $\mu \in X$ such that $p \preceq \mu$, $q \npreceq
\mu$ and $\mu \npreceq q$.  Then $V(\mu) \subseteq V(p;q)
\subseteq U$. Let $Z = [\mu,0]_{V(\mu)}$.  Then $Z = \bar Z,~~
s(Z) = V(\mu) \subseteq U,~~ r(Z) = \mu V(\mu) \subsetneqq V(\mu)
\subseteq U$. Therefore $\mathcal{G}$ is locally contractive.

To prove that $\mathcal{G}$ is essentially free, let $x \in
\partial X$.  Then $r^{-1}(x) = \{(x,k,y):x \sim_k y \}$ and
$s^{-1}(x) = \{(z,m,x):z \sim_m x \}$.  Hence $r^{-1}(x) \cap
s^{-1}(x) = \{(x,k,x):x \sim_k x\}$.  Notice that $r^{-1}(x) \cap
s^{-1}(x) = \{x\}$ exactly when $x \sim_k x$ which implies $k = 0$.
If $k \neq 0$ then $x = pt = qt$, for some $p,q \in X$, $t \in
\partial X$ such that $l(p) - l(q) = k$.
If $k > 0$ then $l(p) > l(q)$ and we get $q \preceq p$.  Hence $p
= qb$, for some $b \in X \setminus \{0\}$.  Therefore $x = qbt =
qt$, implying $bt = t$.  Hence $x = qbbb\ldots$.  Similarly,
if $k < 0$ then $x = pbbb\ldots$, for some $b \in X$, with $l(b) >
0$.
Therefore, to prove that $\mathcal{G}$ is essentially free, we
need to prove that if $U$ is an open set containing an element of
the form $pbbb\ldots$, with $l(b) > 0$, then it contains an
element that cannot be written in the form of $qddd\ldots$, with
$l(d) > 0$.
Suppose $pbbb\ldots \in U$, where $U$ is open in $\mathcal{G}^0$.
then $U \supseteq V(\mu;\nu)$ for some $\mu,\nu \in X$.  Choose
$\eta \in X$ such that $\mu \preceq \eta,~~ \nu \npreceq \eta$,
and $\eta \npreceq \nu$.  Then $V(\eta) \subseteq V(\mu;\nu)
\subseteq U$. Now take  $a_1 = (1,0,\ldots, 0) \in G_1,~~ a_2 = (2,0,\ldots, 0) \in G_2,~~
a_3 = (3,0,\ldots, 0) \in G_1,~~ a_4 = (4,0,\ldots, 0) \in G_2$, etc.
Now $t = \eta a_1a_2a_3 \ldots \in V(\eta)
\subseteq U$, but $t$ cannot be written in the form of
$qddd\ldots$.

To prove that $\mathcal{G}$ is minimal, let $E \subseteq
\mathcal{G}^0$ be nonempty open and invariant, i.e., $E =
r(s^{-1}(E))$.  We want to show that $E = \mathcal{G}^0$.  Since
$E$ is open, there exist $p,q \in X$ such that $V(p;q) \subseteq
E$. But
$$s^{-1}(V(p;q)) = \{(\mu x, l(\mu) - l(p\nu), p\nu x): q \npreceq
p\mu x\}.$$
Let $x \in \mathcal{G}^0$.  Choose $\nu \in X$ such that $p\nu
\npreceq q$ and $q \npreceq p\nu$.  Then $(x, -l(p\nu), p\nu x)
\in s^{-1}(V(p;q)) \subseteq s^{-1}(E)$ and $r(x, -l(p\nu), p\nu x) = x$.  That is, $x \in r(s^{-1}(V(p;q)))$, hence $E =
\mathcal{G}^0$.  Therefore $\mathcal{G}$ is minimal.
\end{proof}

\begin{proposition} \label{prop.AD24} \cite[Proposition
2.4]{AD}

Let $G$ be an $r$-discrete groupoid, essentially free and locally contractive. Then every non-zero hereditary $C^*$-subalgebra of $C^*_r(G)$ contains an infinite projection.
\end{proposition}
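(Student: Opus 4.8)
Write $A:=C^*_r(G)$ and let $E:A\to C_0(G^0)$ be the canonical faithful conditional expectation (on $C_c(G)$ it is restriction to the unit space). The plan is: start from an arbitrary positive element $a$ of the given hereditary subalgebra $B$; use essential freeness to replace $a$ by a genuinely diagonal positive element supported on a small compact open subset of $G^0$; use local contractiveness to produce an infinite projection inside a corner cut down by that diagonal element; and transfer it back into $B$ by a standard Cuntz‑subequivalence argument. Fix $a\in B$ with $a\ge 0$, $\|a\|=1$, put $\eta:=\|E(a)\|>0$, and let $\varepsilon>0$ be small (in particular $\varepsilon<\eta/2$), to be pinned down at the end.

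First I would choose $f=f^*\in C_c(G)$ with $\|f-a\|<\varepsilon$ and split $f=E(f)+f_1$, where $E(f)=f|_{G^0}\in C_c(G^0)$ and $f_1=f|_{G\setminus G^0}$, which is legitimate because $G^0$ is clopen in the Hausdorff groupoid $G$; then $K:=\mathrm{supp}(f_1)$ is a compact subset of $G\setminus G^0$. Since $\sup E(f)\ge\sup E(a)-\varepsilon=\eta-\varepsilon$ and the units with trivial isotropy are dense, I can pick a unit $x_0$ with trivial isotropy and $E(f)(x_0)>\eta-2\varepsilon$. The point $(x_0,x_0)$ lies outside the compact set $\{(r(\gamma),s(\gamma)):\gamma\in K\}\subseteq G^0\times G^0$ --- a $\gamma\in K$ with $r(\gamma)=s(\gamma)=x_0$ would be a nontrivial isotropy element at $x_0$ --- so, $G^0$ being Hausdorff with a basis of compact open sets, there is a compact open neighbourhood $W_0\ni x_0$ with $E(f)>\eta-2\varepsilon$ throughout $W_0$ and $(W_0\times W_0)\cap\{(r(\gamma),s(\gamma)):\gamma\in K\}=\emptyset$. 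Set $p_1:=\chi_{W_0}$, a projection in $C_c(G^0)$. The disjointness condition kills the off‑diagonal part, $p_1 f_1 p_1=0$, so $p_1 f p_1=p_1 E(f)p_1=E(f)|_{W_0}=:g$, a positive element of $C(W_0)$ bounded below by $\eta-2\varepsilon$.

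Next I would invoke local contractiveness at the nonempty open set $W_0$: using the basis of compact open bisections one gets a compact open bisection $Z$ with compact open source $V:=s(Z)\subseteq W_0$ and $r(Z)\subsetneqq V$. The element $v:=\chi_Z$ satisfies $v^*v=\chi_V=:p_0$ and $vv^*=\chi_{r(Z)}\le p_0$ with $\chi_{r(Z)}\neq p_0$, so $p_0$ is an \emph{infinite} projection, being Murray--von Neumann equivalent via $v$ to its proper subprojection $\chi_{r(Z)}$. Because $V\subseteq W_0$, $p_0 f p_0=g|_V\ge(\eta-2\varepsilon)p_0$ is invertible in the corner $p_0 A p_0$; with $w:=(p_0 f p_0)^{-1/2}p_0$ one has $wfw^*=p_0$ and $\|w\|^2\le(\eta-2\varepsilon)^{-1}$, hence $c:=waw^*\ge 0$ satisfies $\|c-p_0\|\le\|w\|^2\|f-a\|<\varepsilon(\eta-2\varepsilon)^{-1}$. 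Choosing $\varepsilon$ so small that this is $<1/4$, the spectrum of $c$ misses $1/2$, and $p':=\chi_{[1/2,\infty)}(c)\in C^*(c)$ is a projection with $\|p'-p_0\|<1$; thus $p'\sim p_0$ and $p'$ is infinite.

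Finally I would transfer $p'$ into $B$. Put $y:=wa^{1/2}$, so that $c=yy^*$ and $y^*y=a^{1/2}(p_0 f p_0)^{-1}a^{1/2}\le(\eta-2\varepsilon)^{-1}a$; hence $\overline{(y^*y)A(y^*y)}\subseteq\overline{aAa}\subseteq B$ (the last inclusion since $B$ is hereditary and $a\in B$). Since $p'\in C^*(c)\subseteq\overline{(yy^*)A(yy^*)}$, the canonical $*$-isomorphism $\overline{(yy^*)A(yy^*)}\cong\overline{(y^*y)A(y^*y)}$ coming from the polar decomposition of $y$ sends $p'$ to a projection $q\in\overline{(y^*y)A(y^*y)}\subseteq B$ that is Murray--von Neumann equivalent to $p'$. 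As $q\sim p'\sim p_0$ and $p_0$ is infinite, $q$ is an infinite projection in $B$, as required. The step I expect to be the real obstacle is the one in the second paragraph: arranging that cutting down by $\chi_{W_0}$ annihilates the off‑diagonal part $f_1$, which is exactly where essential freeness (density of units with trivial isotropy) must be combined with compactness of $\mathrm{supp}(f_1)$ and Hausdorff separation in $G^0\times G^0$; the concluding transfer, while it leans on the standard fact that $\overline{yy^*Ayy^*}$ and $\overline{y^*yAy^*y}$ are canonically isomorphic, is routine.
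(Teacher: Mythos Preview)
The paper does not supply its own proof of this proposition: it is quoted from Anantharaman-Delaroche \cite[Proposition~2.4]{AD} and used as a black box to obtain Corollary~\ref{corr.551}. Your argument is essentially the proof given in \cite{AD}, and it is correct.

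One step you pass over quickly does deserve a sentence of justification: the definition of local contractiveness produces an \emph{open} $G$-set $Z$ with $s(\bar Z)\subseteq W_0$ and $r(\bar Z)\subsetneqq s(Z)$, whereas you immediately claim a \emph{compact open} bisection $Z$ with $r(Z)\subsetneqq s(Z)=V$. This is true in an $r$-discrete groupoid with a basis of compact open bisections, but it is not entirely automatic---shrinking $Z$ to a compact open sub-bisection can destroy the inclusion $r(Z)\subseteq s(Z)$---and in \cite{AD} a short auxiliary argument handles it. Everything else (the use of essential freeness to kill the off-diagonal part $f_1$ after cutting down by $\chi_{W_0}$, the functional-calculus perturbation from $c$ to $p'$, and the transfer of $p'$ into $B$ via the canonical isomorphism $\overline{yy^*Ayy^*}\cong\overline{y^*yAy^*y}$) is carried out correctly.
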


\begin{corollary} \label{corr.551}
$C^*_r(\mathcal{G})$ is simple and purely infinite.
\end{corollary}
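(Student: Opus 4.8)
The plan is to obtain the corollary as a quick formal consequence of Lemma \ref{lemm.555} together with Proposition \ref{prop.AD24}, since all of the genuine dynamical content has already been established; there is essentially no new computation to perform.

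First I would treat simplicity. The relevant general fact is that for an essentially free $r$-discrete groupoid $G$ there is an order-preserving bijection between the open invariant subsets of the unit space $G^0$ and the closed two-sided ideals of $C^*_r(G)$ (due to Renault; see also \cite{AD} and the references therein). By Proposition \ref{prop.501} our groupoid $\mathcal{G}$ is $r$-discrete, and by Lemma \ref{lemm.555} it is essentially free, so this correspondence applies. Again by Lemma \ref{lemm.555}, $\mathcal{G}$ is minimal, i.e.\ the only open invariant subsets of $\mathcal{G}^0 = \partial X$ are $\emptyset$ and $\partial X$; hence the only closed two-sided ideals of $C^*_r(\mathcal{G})$ are $\{0\}$ and $C^*_r(\mathcal{G})$, so $C^*_r(\mathcal{G})$ is simple.

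Next, pure infiniteness. Combining Proposition \ref{prop.501} (so that $\mathcal{G}$ is $r$-discrete) with Lemma \ref{lemm.555} (so that $\mathcal{G}$ is essentially free and locally contractive), Proposition \ref{prop.AD24} yields that every non-zero hereditary $C^*$-subalgebra of $C^*_r(\mathcal{G})$ contains an infinite projection. Since we have just shown that $C^*_r(\mathcal{G})$ is simple, and since a simple $C^*$-algebra all of whose non-zero hereditary subalgebras contain an infinite projection is by definition purely infinite (in the sense of Cuntz), we conclude that $C^*_r(\mathcal{G})$ is simple and purely infinite. Alternatively, one may simply invoke the combined statement in \cite{AD} that an essentially free, minimal, locally contractive $r$-discrete groupoid has simple purely infinite reduced $C^*$-algebra.

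The only point requiring care is not so much an obstacle as a matter of bookkeeping: one must use the version of the ideal/invariant-subset correspondence valid for \emph{reduced} $C^*$-algebras of essentially free $r$-discrete groupoids (no amenability hypothesis is needed for the simplicity conclusion here), and one should note that ``simple, together with an infinite projection in every non-zero hereditary $C^*$-subalgebra'' is precisely the definition of simple and purely infinite, so nothing further — for instance, exhibiting a properly infinite projection in the whole algebra directly — needs to be argued beyond what Proposition \ref{prop.AD24} already provides.
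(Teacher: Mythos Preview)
Your proof is correct and follows essentially the same approach as the paper: the paper's proof is a single line invoking Lemma~\ref{lemm.555} and Proposition~\ref{prop.AD24}, and you have simply unpacked what that invocation means, making explicit the use of the Renault/Anantharaman-Delaroche ideal correspondence (essential freeness $+$ minimality $\Rightarrow$ simplicity) alongside Proposition~\ref{prop.AD24} for pure infiniteness. There is no substantive difference in strategy.
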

\begin{proof}
This follows from Lemma \ref{lemm.555} and Proposition \ref{prop.AD24}.
\end{proof}

\begin{theorem}
$C^*(\mathcal{G})$ is simple, purely infinite, nuclear and
classifiable.
\end{theorem}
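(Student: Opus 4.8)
The plan is to identify $C^*(\mathcal{G})$ as a unital Kirchberg algebra in the UCT class and then to appeal to the Kirchberg--Phillips classification theorem. Three things are needed: amenability of $\mathcal{G}$, which upgrades Corollary \ref{corr.551} from the reduced to the full $C^*$-algebra and simultaneously yields nuclearity and the UCT; separability, which is Remark \ref{remk.501}(a); and unitality, which holds because $\partial X = V(0)$ is compact open, so $\chi_{U(0,0)}$ is a unit.

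First I would prove that $\mathcal{G}$ is amenable. Consider the continuous cocycle $c\colon \mathcal{G}\to\Lambda$ given by $c(x,k,y)=k$; it is a groupoid homomorphism with $\ker c = \mathcal{G}_0$. The subgroupoid $\mathcal{G}_0$ is principal, since the lag is forced to be $0$ and $(x,0,y)\mapsto(x,y)$ is injective on it; and the proof of Theorem \ref{thm.501} realizes $C^*(\mathcal{G}_0)$ as the closure of an increasing union of finite-dimensional subalgebras, each spanned by a system of matrix units $\{\chi_{[p,q]_W}\}$ supported on finitely many pairwise-disjoint compact open $\mathcal{G}$-sets satisfying the matrix-unit relations. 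Adjoining $\chi_{U(0,0)}$ where needed, these families assemble into an increasing sequence of compact open principal subgroupoids whose union is $\mathcal{G}_0$, so $\mathcal{G}_0$ is an AF groupoid and in particular amenable. Since $\Lambda = \mathds{Z}^n$ is amenable and $\ker c$ is amenable, I would then conclude that $\mathcal{G}$ is amenable, using the stability of groupoid amenability under extensions by amenable groups.

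Once amenability is established, the full and reduced groupoid $C^*$-algebras coincide, $C^*(\mathcal{G}) = C^*_r(\mathcal{G})$ (see \cite{R}), so Corollary \ref{corr.551} gives that $C^*(\mathcal{G})$ is simple and purely infinite, and amenability gives that it is nuclear. (Nuclearity can also be obtained from Takai duality: $(A\times_\alpha\hat{\Lambda})\times_{\hat{\alpha}}\Lambda \cong A\otimes\mathcal{K}(L^2(\hat{\Lambda}))$, and the left-hand side is the crossed product of the AF algebra $A\times_\alpha\hat{\Lambda}$ of Section 5 by the amenable group $\mathds{Z}^n$, hence nuclear, whence so is $A = C^*(\mathcal{G})$.) For the UCT I would cite Tu's theorem that the $C^*$-algebra of a second-countable amenable groupoid satisfies the Universal Coefficient Theorem; alternatively, $A\otimes\mathcal{K}(L^2(\hat{\Lambda})) \cong (A\times_\alpha\hat{\Lambda})\times_{\hat{\alpha}}\mathds{Z}^n$ is obtained from an AF algebra by $n$ successive crossed products by $\mathds{Z}$, each of which keeps one inside the bootstrap class, so $A\otimes\mathcal{K}$ and hence $A$ lies in the bootstrap class.

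Combining these, $C^*(\mathcal{G})$ is a separable, unital, nuclear, simple, purely infinite $C^*$-algebra satisfying the UCT, i.e.\ a unital UCT Kirchberg algebra, and is therefore classified up to $*$-isomorphism by its $K$-theory together with the class $[1]$ in $K_0$, by the Kirchberg--Phillips classification theorem. I expect the amenability step to be the main obstacle: extracting the AF-groupoid structure of $\mathcal{G}_0$ from the matrix-unit description in the proof of Theorem \ref{thm.501} and then correctly applying the extension theorem for groupoid amenability, since this single fact is what bridges Corollary \ref{corr.551} to the full algebra and simultaneously supplies nuclearity and the UCT.
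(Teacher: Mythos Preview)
Your proposal is correct, but it takes a genuinely different route from the paper's proof. You go through \emph{groupoid amenability}: you argue that $\mathcal{G}_0$ is an AF groupoid (hence amenable), and then use the cocycle $c:\mathcal{G}\to\Lambda$ together with amenability of $\Lambda=\mathds{Z}^n$ to conclude that $\mathcal{G}$ is amenable; from this you get $C^*(\mathcal{G})=C^*_r(\mathcal{G})$, nuclearity, and (via Tu) the UCT in one stroke. The paper never invokes amenability of $\mathcal{G}$. Instead it proves $C^*(\mathcal{G})=C^*_r(\mathcal{G})$ by a direct conditional-expectation argument: the faithful expectation $E:C^*(\mathcal{G})\to C^*(\mathcal{G}_0)$ obtained by averaging over $\hat{\Lambda}$ factors as $E=E^r\circ\lambda$, where $\lambda:C^*(\mathcal{G})\to C^*_r(\mathcal{G})$ is the canonical surjection and $E^r$ is the reduced expectation, so $\lambda$ is injective. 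Nuclearity and classifiability are then obtained exactly as in your ``alternative'' paragraph, via Takesaki--Takai duality: $C^*(\mathcal{G})$ is stably isomorphic to $(A\times_\alpha\hat{\Lambda})\times_{\hat{\alpha}}\Lambda$, and the inner crossed product is the AF algebra of Section~5. What your approach buys is a single structural fact (amenability of $\mathcal{G}$) that yields everything at once, at the cost of importing the Anantharaman-Delaroche--Renault extension theorem and Tu's UCT theorem, and of carefully extracting an increasing sequence of elementary subgroupoids from the matrix-unit families of Theorem~\ref{thm.501} (the step you flag as the obstacle). The paper's approach is more self-contained: the expectation argument uses only what was already built in Section~5, and the bootstrap/UCT conclusion follows from the AF crossed-product picture without any groupoid-amenability machinery.
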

\begin{proof}
It follows from Takesaki-Takai Duality Theorem that $C^*(\mathcal{G})$ is stably
isomorphic to $C^*(\mathcal{G}) \times_\alpha \hat{\Lambda} \times_{\hat{\alpha}}
\Lambda$.  Since $C^*(\mathcal{G}) \times_\alpha \hat{\Lambda}$ is an AF algebra and that
$\Lambda = \mathds{Z}^2$ is amenable, $C^*(\mathcal{G})$ is nuclear and
classifiable.  We prove that $C^*(\mathcal{G}) = C_r^*(\mathcal{G})$. From Theorem \ref{thm.501} we get that the fixed-point algebra $C^*(\mathcal{G}_0)$ is an AF algebra.  The inclusion $C_c(\mathcal{G}_0) \subseteq C_c(\mathcal{G}) \subseteq C^*(\mathcal{G})$ extends to an injective $*$-homomorphism $C^*(\mathcal{G}_0) \subseteq C^*(\mathcal{G})$ (injectivity follows since $C^*(\mathcal{G}_0)$ is an AF algebra).  Since $C^*(\mathcal{G}_0) = C_r^*(\mathcal{G}_0)$, it follows that $C^*(\mathcal{G}_0) \subseteq C^*_r(\mathcal{G})$.  Let $E$ be the conditional expectation of $C^*(\mathcal{G})$ onto $C^*(\mathcal{G}_0)$ and $\lambda$ be the canonical map of $C^*(\mathcal{G})$ onto $C^*_r(\mathcal{G})$.  IF $E^r$ is the conditional expectation of $C^*_r(\mathcal{G})$ onto $C^*(\mathcal{G}_0)$, then $E^r \circ \lambda = E$.  It then follows that $\lambda$ is injective.  Therefore $C^*(\mathcal{G}) = C_r^*(\mathcal{G})$.  Simplicity and pure infiniteness follow from Corollary \ref{corr.551}.
\end{proof}

\begin{remark}
Kirchberg-Phillips classification theorem states that
simple, unital, purely infinite, and nuclear $C^*$-algebras are
classified by their $K$-theory \cite{P}.  In the continuation of this project, we wish to compute the $K$-theory of $C^*(\mathds{Z}^n)$.

Another interest is to generalize the study and/or the result to a more general ordered group or even a ``larger" group, such as $\mathds{R}^n$
\end{remark}


\end{document}